\def\td{\mathrm{d}} \def\tD{\mathrm{D}}
\def\tr{\mathrm{tr}}
\def\det{\mathrm{det}}
\def\ex{\mathds{E}}
\def\eps{\varepsilon}
\def\prb{\mathds{P}} \def\qrb{\mathds{Q}}
\def\one{\mathds{1}}
\def\wass{\mathds{W}}
\def\bx{\bar{X}}
\def\NN{\mathbb{N}}
\def\ZZ{\mathbb{Z}}
\def\RR{\mathbb{R}}
\def\CC{\mathbb{C}}
\def\lb{\left(} \def\rb{\right)}
\def\lv{\left|} \def\rv{\right|}
\def\lvv{\left\|} \def\rvv{\right\|}
\def\lq{\leqslant} \def\gq{\geqslant}
\def\levy{L\'{e}vy} \def\ito{It\={o}}
\def\var{\textrm{var}} \def\poi{\textrm{Poi}}
\def\P{\mathcal{P}} \def\Q{\mathcal{Q}}
\def\F{\mathcal{F}} \def\N{\mathcal{N}}
\def\tilde{\widetilde}
\newcommand\num{\stepcounter{equation}\tag{\theequation}}
\newtheorem*{theorem*}{Theorem}
\newtheorem{theorem}{Theorem}[section]
\newtheorem{lemma}[theorem]{Lemma}
\newtheorem{corollary}[theorem]{Corollary}
\newtheorem{proposition}[theorem]{Proposition}
\newtheorem{remark}[theorem]{Remark}
\providecommand{\customgenericname}{}
\newcommand{\newcustomtheorem}[2]{%
  \newenvironment{#1}[1]
  {%
   \renewcommand\customgenericname{#2}%
   \renewcommand\theinnercustomgeneric{##1}%
   \innercustomgeneric
  }
  {\endinnercustomgeneric}
}
\makeatletter \@addtoreset{equation}{section}
\renewcommand{\thesection}{\arabic{section}}
\renewcommand{\theequation}{\thesection.\arabic{equation}}
\title{\textsc{A Multi-Dimensional Central Limit Bound and its Application to the Euler Approximation of {\levy}-SDEs}}
\author{X\={\i}l\'{\i}ng Zh\={a}ng
        \thanks{School of Mathematics, The University of Edinburgh. e-mail: xiling.zhang@ed.ac.uk}}
\date{16 June, 2017}
\begin{document}
\maketitle
\begin{abstract}
In the one-dimensional case Rio \cite{rio2009uppboumindiscenlimthe} gave a concise bound for the central limit theorem in the Vaserstein distances, which is a ratio between some higher moments and some powers of the variance. As a corollary, it gives an estimate for the normal approximation of the small jumps of {\levy} processes, and Fournier \cite{fournier2011SimappLevstodifequ} applied that to the Euler approximation of stochastic differential equations driven by the L\'evy noise. It will be shown in this article that following Davie's idea in \cite{davie2015Polpernordis}, one can generalise Rio's result to the multidimensional case, and have higher-order approximation via the perturbed normal distributions, if Cram\'er's condition and a slightly stronger moment condition are assumed. Fournier's result can then be partially recovered.
\end{abstract}

\section{Introduction}

Given\footnote{Throughout this article $\ZZ^+$ and $\NN$ denote the sets of positive and non-negative integers, respectively.} $d,q,q_1\in\ZZ^+$, let $a\in\RR^q,~B\in\RR^{q\times q_1}$ and $(\Omega,\mathscr{F},\prb)$ be a complete probability space equipped with a filtration $\{\mathscr{F}_t\}_{t\gq0}$ generated by a $q_1$-dimensional Wiener process $\{W_t\}$ and an independent Poisson random measure $N(\td z,\td s)$ on $\RR^q\setminus\{0\}\times[0,\infty)$ with intensity $\nu(\td z)\td s$. Consider the $q$-dimensional {\levy} process on a bounded interval $[0,T]$:
\begin{equation}\label{noise}
Z_t=at+BW_t+\int_0^t\int_{\RR^q\setminus\{0\}}z\tilde{N}(\td z,\td s),
\end{equation}
where $\tilde{N}(\td z,\td s)$ is the compensated Poisson measure. Assume the second moment of the {\levy} measure $\int_{\RR^q\setminus\{0\}}|z|^2\nu(\td z)<\infty$, where $|\cdot|$ denotes the modulus. For $x_0\in\RR^q$ and a bounded Lipschitz function $\sigma:\RR^d\to\RR^{d\times q}$, consider the $d$-dimensional SDE driven by the L\'evy process above:
\begin{equation}\label{sde}
x_t=x_0+\int_0^t\sigma(x_{s-})\td Z_s.
\end{equation}
For $h\in(0,1)$ and $t_k=kh,~k=1,\cdots,[T/h]$, where $[\cdot]$ denotes the integer part, it is known that the standard Euler's approximation,
\begin{equation*}
X_{k+1}:=X_k+\sigma(X_k)\lb Z_{t_{k+1}}-Z_{t_k}\rb, X_0=x_0,
\end{equation*}
converges with rate $1/2$ to the solution of \eqref{sde} in mean-square as $h\to0$ - see e.g. \cite{kohatsuhiga1994EulSchSDEDribySem}, \cite{jacod1998AsyErrDisEulMetStoDifEqu} and \cite{jacod2004EulSchLevDriStoDifEquLimThe}. Although the increments $Z_{t_k}-Z_{t_{k-1}}$ are hard to generate, one may simply ignore the small jumps
\begin{equation}\label{small_jumps}
Z_t^\epsilon:=\int_0^t\int_{0<|z|\lq\epsilon}z\tilde{N}(\td z,\td s),
\end{equation}
for some $\epsilon\in(0,1)$, and show that the mean-square convergence rate is preserved. However, that is not a very economical way of simulation, as pointed out by Fournier \cite{fournier2011SimappLevstodifequ}. Indeed, when the small jumps are completely ignored, the expected computational cost
\begin{equation*}
E_\nu(h)=O\lb h^{-1}+\nu\lb\{|z|>\epsilon\rb\}\rb,
\end{equation*}
can be considerably large. This happens, e.g., when the {\levy} measure $\nu$ behaves like $\alpha$-stable near $0$, i.e. there exist $\tau>0$ and $\alpha\in(0,2)$ s.t. the following condition holds:

\begin{assumption}{$\mathbf{H}(\tau,\alpha)$}\label{alpha-stable}
$\nu(\td z)\simeq|z|^{-q-\alpha}\td z,~\forall0<|z|\lq\tau$.
\end{assumption}

The symbol $\simeq$ is used where both sides are bounded by each other up to a constant factor depending only on $q$. Given condition \ref{alpha-stable}, the set of big jumps has measure $\nu(\{|z|>\epsilon\})\simeq\epsilon^{-\alpha}$, and one has to choose $\epsilon=h^{1/(2-\alpha)}$ to ensure the order $1/2$ of mean-square convergence. As a result $E_\nu(h)=O\lb h^{-1}+h^{\alpha/(\alpha-2)}\rb$ explodes when $\alpha$ is close to $2$.

As a remedy, one may consider approximating the small jumps \eqref{small_jumps} with a normal random variable using the central limit theorem, on which some classical theorems can be found in several books such as \cite{petrov1975SumIndRanVar} and \cite{bhattacharya1976NorAppAsyExp}. Asmussen and Rosi\'nski \cite{asmussen2001AppSmaJumLevProVietowSim} adopted this idea and derived some Berry-Esseen bounds for the normal approximation of the small jumps $Z_1^\epsilon$; they also gave conditions for the weak convergence in the Skorohod space. But their method only works for $q=1$, and the Berry-Esseen-type bounds are not very useful for the strong $L^p$-approximation of {\levy}-SDEs as they only concern the uniform distance between the c.d.f's. Aiming at the Euler approximation of \eqref{sde}, Fournier \cite{fournier2011SimappLevstodifequ} proved that by adding this normal random variable to the Euler scheme the expected computational cost can be controlled (no explosion of $E_\alpha(h)$ near $\alpha=2$), while the $1/2$ convergence rate is still preserved. However, as pointed out himself, the method is also restricted to the case $q=1$.

Such a restriction of dimension only emerged at a key step in \cite{fournier2011SimappLevstodifequ} (Corollary 4.2), borrowed from a result by Rio \cite{rio2009uppboumindiscenlimthe} (Corollary 4.2) on the central limit theorem. The latter ensures that, for a sequence of i.i.d., mean-$0$ random variables\footnote{When only the distribution of the $X_j$'s is considered, the subscript $j$ is omitted for simplicity.} $X_j\in\RR$ and $Y_m:=m^{-1/2}\sum_{j=1}^mX_j$ for any $m\in\ZZ^+$, there is an absolute constant $C$ s.t.
\begin{equation}\label{rio}
\wass_2\lb\prb_m,~\N(0,\var X)\rb\lq C\lb\frac{\ex|X|^4}{\var X}\rb^\frac{1}{2}m^{-\frac{1}{2}},
\end{equation}
where $\prb_m$ denotes the distribution of $Y_m$ and $\wass_p(\cdot,\cdot)$ is the $p$-Vaserstein (or ``Wasserstein") distance. For probability measures $\prb,\qrb$ on $\RR^q$, such a distance is defined by
\begin{equation*}
\wass_p(\prb,\qrb):=\inf_{\pi\in\Pi(\prb,\qrb)}\lb\int_{\RR^q\times\RR^q}|x-y|^p\pi(\td x,\td y)\rb^\frac{1}{p},
\end{equation*}
where $\Pi(\prb,\qrb)$ is the set of all joint probability measures on $\RR^q\times\RR^q$ with marginal laws $\prb$ and $\qrb$. Rio \cite{rio2009uppboumindiscenlimthe} (Theorem 4.1) in fact only assumed the independence of $\{X_j\}$, but regarding central limit approximations and the simulation of {\levy} processes one only considers the i.i.d. case. The constant $C$ in \eqref{rio} would vary in $p$ for a bound in $\wass_p$ and is later optimised in \cite{rio2011AsyConMinDisCenLimThe}. Apart from the restriction $q=1$, Rio's effective bounds only hold for $p\lq4$. But this has been improved by Bobkov \cite{BerBouEdgExpCenLimTheTraDis} (Theorem 1.1), allowing the $\wass_p$-convergence of order $O(m^{-1/2})$ for any $p\gq1$.

The dimensional restriction in Rio and Bobkov's results comes from the fact that when $q=1$, for $p\gq1$ the $\wass_p$ distance between two probability measures $\prb,\qrb$ on $\RR$ is explicitly given (see Theorem 2.18 and Remarks 2.19 in \cite{villani2003TopOptTra}):
\begin{equation}\label{wass_1d}
\wass_p(\prb,\mathds{Q})=\lb\int_0^1\lv F^{-1}(t)-G^{-1}(t)\rv^p\td t\rb^\frac{1}{p},
\end{equation}
where $p\gq1,~F,G$ are the c.d.f's of $\prb,\mathds{Q}$, and $F^{-1},G^{-1}$ are their generalised inverses, respectively. For $p=1$ there is a further equality $\wass_1(\prb,\mathds{Q})=\int_{\RR}|F(x)-G(x)|\td x$; in general there is no explicit formula for $q\gq2$. However, if two probability distributions $\prb$ and $\mathds{Q}$ on $\RR^q$ have densities $f$ and $g$, respectively, instead of the precise formula \eqref{wass_1d} there is the inequality
\begin{equation}\label{crude_bound}
\wass_p(\prb,\mathds{Q})\lq C_p\lb\int_{\RR^q}|x|^p|f(x)-g(x)|\td x\rb^\frac{1}{p},
\end{equation}
for all $p\gq1$, as a variant of Proposition 7.10 in \cite{villani2003TopOptTra}.

This article presents an attempt to handle the normal approximation for the small jumps \eqref{small_jumps} for $q\gq2$ using the bound \eqref{crude_bound}, and give a positive answer to Fournier's question.

Davie \cite{davie2015Polpernordis} sketched an asymptotic approach via Edgeworth expansion of the density of $Y_m$, and proved (as a corollary to Proposition 2 therein) the rate $O(m^{-1/2})$ under the assumption that all moments of $X$ are bounded. Moreover, he in fact showed a coupling between $Y_m$ and the normal distribution perturbed by polynomials. Section 1 of this article basically follows Davie's approach, but expounds detailed calculations and specify the range of $p$ and precisely how many moments of $X$ are needed - see Theorem \ref{CLT} below.

The rate of convergence for the multi-dimensional central limit theorem has been studied using different methods. A strong result by Zaitsev (summarised as Theorem 2 in \cite{EstimatesStrongApproximationMultidimensionalCentralLimitTheorem} and proved as Theorem 1.3 in \cite{MulVerResKomMajTusVecFinExpMom}) gives a sharp Chernoff-type bound, and by Chebyshev's inequality the central limit theorem follows in a stronger sense: for independent $\{X_j\}$ each having identity covariance and independent standard Gaussian $\{\xi_j\}$ with partial sums $\Upsilon_m:=m^{-1/2}\sum_{j=1}^m\xi_j$, if the law of each $X_j$ satisfies certain analyticity conditions (see the definition of the class $\mathcal{A}_q(\tau)$ in \cite{EstimatesStrongApproximationMultidimensionalCentralLimitTheorem}), then the distance $\max_{k\lq m}\lv Y_k-\Upsilon_k\rv$ is of order $O(m^{-1/2}\log m)$ in probability. The logarithmic factor emerges because the method is based on the dyadic approximation by Koml{\'o}s, Major and Tusn{\'a}dy (KMT) \cite{KomlosMajorTusnady}. The KMT method is much stronger than the usual central limit theorem since it considers the simultaneous approximation between $Y_1,Y_2,\cdots,Y_m$ and $\Upsilon_1,\Upsilon_2,\cdots,\Upsilon_m$. Einmahl \cite{einmahl1989ExtResKomMajTustoMulCas} generalised the original KMT method to the multi-dimensional case, and Zaitsev's theorem \cite{MulVerResKomMajTusVecFinExpMom} is an improved version of that, albeit it requires the local existence of the moment generating function.

Since the central limit theorem only concerns the coupling between $Y_m$ and $\Upsilon_m$, one should expect the $\log m$ factor to be removed as in the one-dimensional result of Rio. This has indeed been achieved by Bobkov \cite{bobkov2013RatConEdgExpEntCenLimThe} (Theorem 6.1) under the assumption that $\ex|X|^5<\infty$; given only $\ex|X|^4<\infty$, his result is weakened to $O(m^{-1/2}(\log m)^{q/4-1})$. It is worth mentioning here that, shortly after this article had been submitted, using Stein's method Bonis \cite{bonis2016RatCenLimTheDifAppviaSteMet} (Theorem 8) managed to achieve the optimal rate $O(m^{-1/2})$ given only $\ex|X|^4<\infty$, which is a significant improvement. However, both approaches only work for $p=2$ since their arguments rely on some entropic transport inequalities for the $\wass_2$ distance. In this special case (normal approximation for $Y_m$ in $\wass_2$) the result derived in this article is not optimal, as it requires $\ex|X|^{4+\tau}<\infty$ for some $\tau\in(0,1)$ and Cram{\'e}r's condition $\varlimsup_{|s|\to\infty}|\ex\exp(isX)|<1$. 

Nevertheless, given that $\ex|X|^{6+\tau}<\infty$ and Cram{\'e}r's condition, the result here would give a coupling for $Y_m$ of order $O(m^{-1})$ in $\wass_p$ for a positive even ingeter $p$, if one perturbs the normal distribution with a cubic Edgeworth polynomial. The Edgeworth expansion is used by Bobkov \cite{BerBouEdgExpCenLimTheTraDis} (Corollary 9.2) in the one-dimensional case for higher-order approximations for $Y_m$, but in return Cram\'er's condition and some higher moments are needed. Theorem \ref{CLT} here can be regarded as a generalisation of that.

In Section 2, the central limit bound in $\wass_p$ is applied to the normal approximation for the small jumps \eqref{small_jumps}. This is done by viewing $Z_t^\epsilon$ as a compound Poisson process, assuming Cram{\'e}r's condition and that the L\'evy measure $\nu$ is sufficiently singular at $0$ (Theorem \ref{jump_normal_approx}). A desired coupling $\wass_p(Z_t^\epsilon,\sqrt{t}\N(0,\Sigma_\epsilon))=O(\epsilon)$ is then achieved for $t=\epsilon$ and $\Sigma_\epsilon=\int_{0<|z|\lq\epsilon}zz^\top\nu(\td z)$, which covers the case \ref{alpha-stable}. However, those assumptions can all be removed if one compromises for a suboptimal rate, as is proved in the appendices of Godinho's paper \cite{AsyGraColSpaHomBolEquSofCouPot} (Proposition A.2), where only bounded jumps are considered. Again, there is a logarithmic factor because the proof directly uses the aforementioned result of Zaitsev.

Finally, the significance of using the Vaserstein distances instead of other ones is that, when generating numerical approximations for an SDE, the convergence in $\wass_p$ is equivalent to the usual strong $L^p$-convergence. The reader is referred to the last section of \cite{davie2014Patappstodifequusicou} for a discussion on the contexts where such a substitution holds or fails. Unlike some of the results therein, the method to be introduced here is applicable to the simulation of stochastic flows defined by a {\levy} SDE, since it only aims at a coupling for the increments $Z_{t_k}-Z_{t_{k-1}}$.

Throughout this article the generic positive constants $C_\cdot$ and $c_\cdot$ may change their values, with subscripts indicating their dependence of parameters. The notations $\lesssim$ and $\gtrsim$ indicate inequalities that hold with a factor $C_q$. The notation $\xi_\Sigma$ always stands for an $\RR^q$-random variable following $\N(0,\Sigma)$. The symbol $|\cdot|$, depending on the object it acts on, stands for the modulus of vectors on $\RR^q$, the absolute value for scalars, and the $1$-norm of multi-indices on $\NN^q$. For any $\rho\in\NN^q$, it would be convenient to introduce the notation $|\rho|_\ast:=\rho_1+2\rho_2+\cdots+q\rho_q$. In the context of matrices, $I$ stands for the identity matrix and $\|\cdot\|$ denotes any matrix norm.

\section{A Coupling for the Central Limit Theorem}

This section follows Davie's asymptotic approach via Edgeworth expansion briefly sketched in \cite{davie2015Polpernordis}, and elaborates each step rigorously. The goal is to achieve a good $\wass_p$ bound using \eqref{crude_bound}, and for that one may first approximate the Fourier transform.

\subsection{Asymptotic Estimates of the Characteristic Function}

Let $q\in\ZZ^+$ and $\{X_j\}_{j\gq1}$ be a sequence of i.i.d. $\RR^q$-random variables with mean $0$, covariance $\Sigma$ and characteristic function $\chi$, and define the weighted sum $Y_m=m^{-1/2}\sum_{i=1}^mX_j,~m\in\ZZ^+$. Denote by $\psi_m$ and $\prb_m$ the characteristic function and distribution of $Y_m$, respectively. Then one has asymptotic expansion (using the multi-index $\alpha\in\NN^q$):
\begin{equation*}
\log\chi(s)\sim-\frac{1}{2}s\cdot\Sigma s+\sum_{|\alpha|\gq3}\frac{i^{|\alpha|}}{\alpha!}\mu_\alpha s^\alpha,
\end{equation*}
where $\mu_\alpha=\mu_\alpha(X)=i^{-|\alpha|}\partial^\alpha\log\chi(0)$ is the $\alpha$-th cumulant of $X$. This gives a formal expansion for $\log\psi_m(z)=m\log\chi(m^{-1/2}z)\sim-\frac{1}{2}z\cdot\Sigma z+\sum_{|\alpha|\gq3}\frac{i^{|\alpha|}}{\alpha!}m^{1-|\alpha|/2}\mu_\alpha z^\alpha$, implying that
\begin{equation}\label{psi_expand}
\psi_m(z)\sim e^{-\frac{1}{2}z\cdot\Sigma z}\lb1+\sum_{k=1}^\infty m^{-\frac{k}{2}}P_k(z)\rb,
\end{equation}
where $P_k(z)$ is a polynomial whose monomials have highest degree $3k$ and lowest degree $k+2$, with coefficients bounded by $C_k(\ex|X|^{k+2})^k$ - see Lemma 7.1 in \cite{bhattacharya1976NorAppAsyExp}. The inverse Fourier transform of \eqref{psi_expand} gives the Edgeworth expansion for the density $f_m$ of $Y_m$, if it exists. Detailed derivation for $q=1$ can also be found in \cite{petrov1975SumIndRanVar} (Chapter VI).

In this section the shorthand notations $\eps:=m^{-1/2},~\P_{\eps,r}:=1+\sum_{k=1}^r\eps^kP_k,~\forall r\in\ZZ^+$, and $\P_\eps:=\P_{\eps,\infty}$ are used, and $\eps$ and $m$ may be frequently interchanged. Denote by $\lambda_1\lq\cdots\lq\lambda_q$ the eigenvalues of $\Sigma$, and assume $\lambda_1\lq1\lq\lambda_q$ without loss of generality. Furthermore, $\forall M>0$ denote $\kappa_M:=1\vee\ex|X|^M$, then $\kappa_M^{1/M}$ increases in $M$ by H\"older's inequality, and so does $\kappa_M$. By Lemma 6.3 in \cite{bhattacharya1976NorAppAsyExp}, $|\mu_\alpha|\lq C_\alpha\kappa_{|\alpha|},~\forall\alpha\in\NN^q$.

\begin{lemma}\label{char_estimate_1}
Suppose $\Sigma$ is non-singular and $\ex|X|^{n+\tau}<\infty$ for a fixed integer $n\gq3$ and $\tau\in(0,1)$. Let $\beta\in(0,1/3)$ and $\delta:=\min\{\lambda_1/\kappa_3,\kappa_n^{-1/n}/2\}$. Then for $|z|\lq m^{1/2}\delta,~m\in\ZZ^+,~|\psi_m(z)|\lq\exp\lb-\frac{1}{4}z\cdot\Sigma z\rb$; for $|z|\lq m^{\beta/2}$ and $m>\max\{(\kappa_3/\lambda_1)^3,~\kappa_{n+\tau}^{\max\{4,6/(n(1-3\beta))\}}\}$,
\begin{equation}\label{annulus}
\lv\psi_m(z)-e^{-\frac{1}{2}z\cdot\Sigma z}\P_{\eps,n-2}(z)\rv\lq C_{n,\tau}\kappa_{n+\tau}^{n-2}\lb|z|^{n+1}+|z|^{3(n-1)}\rb e^{-\frac{1}{4}z\cdot\Sigma z}\eps^{n-1}.
\end{equation}
\end{lemma}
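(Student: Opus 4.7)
The plan is to handle the two assertions separately, exploiting the nondegeneracy of $\Sigma$ and the Hölder continuity of the top-order derivatives of $\chi$ that follows from $\ex|X|^{n+\tau}<\infty$. For the first inequality my starting point is the second-order Taylor expansion $\chi(s)=1-\tfrac12 s\cdot\Sigma s+R_2(s)$, with $|R_2(s)|\lq C\kappa_3|s|^3$ from the integral form of the remainder and $\ex|X|^3<\infty$. Since $s\cdot\Sigma s\gq \lambda_1|s|^2$, whenever $|s|\lesssim \lambda_1/\kappa_3$ the cubic remainder is absorbed by half of the quadratic, giving $|\chi(s)|\lq 1-\tfrac14 s\cdot\Sigma s\lq \exp(-\tfrac14 s\cdot\Sigma s)$. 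Substituting $s=z/\sqrt m$ and raising to the $m$-th power then yields the first claim on $|z|\lq \sqrt m\delta$ as soon as $\delta\lq \lambda_1/\kappa_3$, which is built into the definition of $\delta$.

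For the Edgeworth-type bound I would Taylor-expand $\log\chi$ to order $n$ with a Hölder remainder. The elementary estimate $|e^{iu}-e^{iv}|\lq 2^{1-\tau}|u-v|^\tau$, applied to $\partial^\alpha\chi(s)=\ex[(iX)^\alpha e^{is\cdot X}]$, shows that every $\partial^\alpha\chi$ with $|\alpha|=n$ is $\tau$-Hölder with constant of order $\kappa_{n+\tau}$; the same holds for $\partial^\alpha\log\chi$ in a fixed neighbourhood of $0$, so
\begin{equation*}
\log\chi(s)=-\tfrac12 s\cdot\Sigma s+\sum_{3\lq|\alpha|\lq n}\tfrac{i^{|\alpha|}\mu_\alpha}{\alpha!}s^\alpha+R(s),\qquad |R(s)|\lq C_n\kappa_{n+\tau}|s|^{n+\tau}.
\end{equation*}
Setting $s=\eps z$ and multiplying by $m$ gives $\log\psi_m(z)=-\tfrac12 z\cdot\Sigma z+Q_m(z)+\tilde R(z)$, where $Q_m(z)=\sum_{k=1}^{n-2}\eps^k q_k(z)$ with $q_k$ homogeneous of degree $k+2$ and coefficients of order $\kappa_{k+2}$, and $|\tilde R(z)|\lq C_n\kappa_{n+\tau}|z|^{n+\tau}\eps^{n+\tau-2}$. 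The lower bounds on $m$ combined with $|z|\lq m^{\beta/2}$ and $\beta<1/3$ force $|Q_m(z)|$ and $|\tilde R(z)|$ below $1/2$, so subsequent exponentiation is harmless.

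Next I would decompose
\begin{equation*}
e^{Q_m+\tilde R}-\P_{\eps,n-2}=\lb e^{Q_m}-\P_{\eps,n-2}\rb e^{\tilde R}+\P_{\eps,n-2}\lb e^{\tilde R}-1\rb.
\end{equation*}
By Lemma 7.1 of \cite{bhattacharya1976NorAppAsyExp}, the formal $\eps$-expansion $e^{Q_m}=1+\sum_{l\gq 1}\eps^l P_l(z)$ has $P_l$ of degree between $l+2$ and $3l$ with coefficients of size $\kappa_{l+2}^l$, so its tail $\sum_{l\gq n-1}\eps^l P_l(z)$ is geometrically dominated by the leading term $\eps^{n-1}P_{n-1}(z)$, which already carries the polynomial factor $|z|^{n+1}+\cdots+|z|^{3(n-1)}$ appearing on the right-hand side of \eqref{annulus}. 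The second summand is handled via $|e^{\tilde R}-1|\lq |\tilde R|e^{|\tilde R|}$ and the boundedness of $\P_{\eps,n-2}$. Multiplying through by $e^{-\frac12 z\cdot\Sigma z}$ and peeling off one factor $e^{-\frac14 z\cdot\Sigma z}$ (to absorb polynomial growth in $|z|$) then produces the right-hand side of \eqref{annulus}.

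The delicate step is the bookkeeping for the $\tilde R$-term: its Hölder bound has the formally worse $\eps$-power $\eps^{n+\tau-2}$ instead of the target $\eps^{n-1}$, so the excess factor $(|z|\eps)^{\tau-1}$ must be absorbed using the quantitative lower bounds $m>(\kappa_3/\lambda_1)^3$ and $m>\kappa_{n+\tau}^{6/(n(1-3\beta))}$ together with the constraint $|z|\lq m^{\beta/2}$, played off against the polynomial amplifier $|z|^{n+1}+|z|^{3(n-1)}$ and the spare Gaussian decay $e^{-\frac14 z\cdot\Sigma z}$. That is the only nontrivial piece of the argument; everything else reduces to routine Taylor-expansion bookkeeping together with the coefficient bounds from \cite{bhattacharya1976NorAppAsyExp}.
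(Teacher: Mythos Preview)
Your outline matches the paper's proof closely. For the first claim you use the same second-order Taylor expansion of $\chi$ with cubic remainder; the paper does exactly this (routing through $\log|\chi(s)|\lq|\chi(s)|-1$, but the substance is identical). For the second claim the only structural difference is that the paper first expands $\chi$ itself to order $n$ in moments, writes $\log(1+T_n)$, and compares with the cumulant polynomial $S_n$, whereas you expand $\log\chi$ directly; both routes yield the same H\"older remainder $|\tilde R(z)|\lq C_n\kappa_{n+\tau}\,\eps^{n+\tau-2}|z|^{n+\tau}$ and the same polynomial expansion of $e^{Q_m}$.

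The step you flag as ``delicate'' is, however, a real gap rather than bookkeeping. The excess factor $(\eps|z|)^{\tau-1}$ \emph{cannot} be absorbed by any of the ingredients you list. For $|z|$ of order $1$ the polynomial amplifier $|z|^{n+1}+|z|^{3(n-1)}$ and the spare Gaussian factor are both $O(1)$, while $(\eps|z|)^{\tau-1}=\eps^{\tau-1}=m^{(1-\tau)/2}$ diverges with $m$; the lower bounds on $m$ only push in the wrong direction. More intrinsically, $\psi_m$ has only $n$ derivatives with a $\tau$-H\"older top derivative, and it agrees with $e^{-\frac12 z\cdot\Sigma z}\P_{\eps,n-2}$ at $z=0$ to order $n$, so near the origin their difference is genuinely of size $|z|^{n+\tau}$ and cannot be dominated by the $|z|^{n+1}$ term on the right of \eqref{annulus}.

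It is worth noting that the paper's own proof makes the same slip: after correctly obtaining $|\chi(s)-1-T_n(s)|\lq C\kappa_{n+\tau}|s|^{n+\tau}$ and $|\log(1+T_n(s))-S_n(s)|\lq C\kappa_n^{1+1/n}|s|^{n+1}$, it adds them and records the result as $C\kappa_{n+\tau}^{1+1/n}|s|^{n+1}$, silently replacing $|s|^{n+\tau}$ by the smaller $|s|^{n+1}$ (for $|s|<1$). What both your argument and the paper's actually establish is the inequality \eqref{annulus} with $\eps^{n+\tau-2}|z|^{n+\tau}$ in place of $\eps^{n-1}|z|^{n+1}$. That weaker form is entirely adequate for Proposition~\ref{prop} and Theorem~\ref{CLT}, since the lost factor $\eps^{1-\tau}$ is swallowed by the free parameter $\eta\in(0,1)$ in the proof of the theorem (take $\eta<\tau$); so the downstream results survive. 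But as stated, the lemma is slightly too strong, and neither your sketch nor the paper's argument closes that gap.
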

\begin{proof}
First of all Taylor's theorem gives the identity
\begin{equation}\label{taylor_chi}
\chi(s)=1-\frac{1}{2}s\cdot\Sigma s+\ex\int_0^1\frac{1}{2}e^{i\theta(s\cdot X)}(1-\theta)^2(is\cdot X)^3\td\theta.
\end{equation}
Then for $|s|\lq\delta_1:=\lambda_1/\kappa_3\lq\sqrt{2/\lambda_q}$, the inequality $\log u\lq u-1,~\forall u>0$, implies that
\begin{align*}
\log|\chi(s)|\lq&\log\lb1-\frac{1}{2}s\cdot\Sigma s+\frac{1}{6}\ex|X|^3|s|^3\rb\lq-\frac{1}{2}s\cdot\Sigma s+\frac{1}{6}\delta_1\ex|X|^3|s|^2\\
\lq&-\frac{1}{2}s\cdot\Sigma s+\frac{1}{4}\lambda_1|s|^2\lq-\frac{1}{4}s\cdot\Sigma s,
\end{align*}
and the first claim $|\psi_m(z)|\lq\exp\lb-\frac{1}{4}z\cdot\Sigma z\rb$ holds for $|z|\lq m^{1/2}\delta_1$.

On the other hand, for $|s|\lq\kappa_3^{-1/3}/2\lq\lambda_q^{-1/2}/2$, from \eqref{taylor_chi} one sees that
\begin{equation*}
\mathrm{Re}\chi(s)\gq1-\frac{1}{2}\lambda_q|s|^2-\frac{1}{6}\ex|X|^3|s|^3>\frac{1}{2},
\end{equation*}
and hence the principle branch of $\log\chi(s)$ is well-defined, and $|\chi(s)|>1/2$. For fixed $n\gq3$, define, $\forall s\in\RR^q$,
\begin{equation*}
S_n(s):=\sum_{|\alpha|=2}^n\frac{i^{|\alpha|}}{\alpha!}\mu_\alpha s^\alpha,~T_n(s):=\sum_{|\alpha|=2}^n\frac{i^{|\alpha|}}{\alpha!}s^\alpha\ex X^\alpha=\sum_{j=2}^n\frac{1}{j!}\ex(is\cdot X)^j.
\end{equation*}
Then $\forall u\in\RR$, using the inequality $|e^{iu}-1|\lq2\wedge|u|\lq2^{1-\tau}|u|^\tau,~\forall\tau\in(0,1)$, and the identity
\begin{equation*}
e^{iu}=\sum_{k=0}^n\frac{(iu)^k}{k!}+\frac{i^n}{(n-1)!}\int_0^1(1-\theta)^{n-1}u^n\lb e^{i\theta u}-1\rb\td\theta,
\end{equation*}
one deduces $|\chi(s)-1-T_n(s)|\lq C_{n,\tau}\kappa_{n+\tau}|s|^{n+\tau}$ by the substitution $u=s\cdot X$. Meanwhile one can write the following expansion (with Taylor remainder $R_n(s)$):
\begin{equation*}
\log\lb1+T_n(s)\rb=\sum_{l=1}^n\frac{(-1)^{l+1}}{l}T_n^l(s)+R_n(s)=S_n(s)+\tilde{S}_n(s)+R_n(s),
\end{equation*}
where $\tilde{S}_n(s)$ is a polynomial of which each monomial has degree at least $n+1$. The fact that the first few terms agree with $S_n(s)$ is due to the relation between the cumulants $\mu_\alpha$ and the moments $\ex X^\alpha$ - see Section 6 (page 46) in \cite{bhattacharya1976NorAppAsyExp}. By the multinomial theorem, for $l=1,\cdots,n$ each monomial in $T_n^l(s)$ takes the form
\begin{equation*}
\sigma_{\rho,l}(s)=C_{n,l,\rho}\prod_{j=1}^{n-1}\lb\ex(s\cdot X)^{j+1}\rb^{\rho_j},
\end{equation*}
for some $\rho\in\NN^{n-1},~|\rho|=l$. Then the monomials $\tilde{\sigma}_{\rho,l}(s)$ of $\tilde{S}_n$ correspond to those with $\sum_{j=1}^{n-1}(j+1)\rho_j=|\rho|_\ast+l\gq n+1$. If one further chooses $\delta_2:=\kappa_n^{-1/n}/2<1$, then for $|s|\lq\delta_2$,
\begin{align*}
|\tilde{\sigma}_{\rho,l}(s)|\lq&C_{n,l}|s|^{|\rho|_\ast+l}\prod_{j=1}^{n-1}\kappa_{j+1}^{\rho_j}\lq C_{n,l}|s|^{n+1}\kappa_n^{-(|\rho|_\ast+l-(n+1))/n}\prod_{j=1}^{n-1}\kappa_{j+1}^{(|\rho|_\ast+l)/(j+1)}\\
=&C_{n,l}|s|^{n+1}\kappa_n^{(n+1)/n}\prod_{j=1}^{n-1}\lb\kappa_n^{-1/n}\kappa_{j+1}^{1/(j+1)}\rb^{|\rho|_\ast+l}\lq C_{n,l}\kappa_n^{1+1/n}|s|^{n+1},
\end{align*}
where H\"older's inequality is used in the last step. Therefore $|\tilde{S}_n(s)|\lq C_n\kappa_n^{1+1/n}|s|^{n+1}$. Also notice that, for $|s|\lq\delta_2$ and $j=2,\cdots,n$, one has $|s|^{j-1}\kappa_j\lq\kappa_n^{1/n}(\kappa_n^{-1/n}\kappa_j^{1/j})^j\lq\kappa_n^{1/n}$. This implies that $|T_n|\lq\sum_{j=2}^n|s|^{j-1}\kappa_j|s|/j!\lq(e-2)\kappa_n^{1/n}|s|/2<1/2$, and that $|1+\theta T_n(x)|\gq1/2$ for any $\forall\theta\in[0,1]$. Therefore
\begin{equation}
|R_n(s)|\lq\int_0^1(1-\theta)^n\lv\frac{T_n(s)}{1+\theta T_n(s)}\rv^{n+1}\td\theta\lq C_n|T_n(s)|^{n+1}\lq C_n\kappa_n^{1+1/n}|s|^{n+1}.
\end{equation}
Thus $|\log(1+T_n(s))-S_n(s)|\lq C_n\kappa_n^{1+1/n}|s|^{n+1}$. Since $|\chi(s)|\wedge|1+T_n(s)|\gq1/2$ for $|s|<\delta_2$, the triangle inequality implies that
\begin{equation*}
\lv\log\chi(s)-S_n(s)\rv\lq2\lv\chi(s)-1-T_n(s)\rv+\lv\log(1+T_n(s))-S_n(s)\rv\lq C_{n,\tau}\kappa_{n+\tau}^{1+1/n}|s|^{n+1}.
\end{equation*}
%Note that\footnote{Subscripts here denote the components of the vector $X=(X_1,\cdots,X_q)$.} $\forall\alpha\in\NN^q,~u\in\RR^q$, by Jensen's inequality $\lv\partial^\alpha\chi(u)\rv=\lv\ex X_1^{\alpha_1}\cdots X_q^{\alpha_q}e^{iu\cdot X}\rv\lq\ex|X^\alpha|\lq q\kappa_{|\alpha|}$. and $|s|<\delta_2$,
%\begin{align*}
%&\lv\log\chi(s)+\frac{1}{2}s\cdot\Sigma s-\sum_{|\alpha|=3}^n\frac{i^{|\alpha|}}{\alpha!}\mu_\alpha s^\alpha\rv=\lv\sum_{|\alpha|=n+1}\frac{i^{|\alpha|}}{\alpha!}\int_0^1(1-\theta)^{|\alpha|-1}s^\alpha\partial^\alpha\log\chi(\theta s)\td\theta\rv\\
%&\lq C_{q,n}\sum_{|\alpha|=n+1}\frac{1}{\alpha!}|s|^{|\alpha|}\int_0^1(1-\theta)^{|\alpha|-1}\frac{p_{\alpha}(X)}{|\chi(\theta s)|^{|\alpha|}}\td\theta<2C_{q,n}\kappa_{n+1}|s|^{n+1},\num\label{0}
%\end{align*}
%where $p_{\alpha}(X)=\sum_\rho\ex|X^{\rho^{1}}|\cdots\ex|X^{\rho^{|\alpha|}}|$, the summation being over all partitions of $\alpha=\rho^{1}+\cdots+\rho^{|\alpha|},~\rho^j\in\NN^q$, and the last step follows from H\"older's inequality. From this one also sees that $|\mu_\alpha|=|\partial^\alpha\log\chi(0)|\lq C_{|\alpha|,q}\ex|X|^{|\alpha|}$.

Returning to $\psi_m$, as $\log\psi_m(z)=\eps^{-2}\log\chi(\eps z)$, from the estimate above one has
\begin{equation}\label{1}
\lv\log\psi_m(z)-\eps^{-2}S_n(\eps z)\rv\lq C_{n,\tau}\eps^{n-1}|z|^{n+1}\kappa_{n+\tau}^{1+1/n}.
\end{equation}
Moreover, writing $U_n(z):=\frac{1}{2}z\cdot\Sigma z+\eps^{-2}S_n(\eps z)$, one can apply Taylor's theorem again to the exponential $\exp(U_n(z))$ (recall the notation $\P_{\eps,\cdot}$):
\begin{align*}
\exp\lb\sum_{|\alpha|=3}^n\frac{i^{|\alpha|}}{\alpha!}\eps^{|\alpha|-2}\mu_\alpha z^\alpha\rb=&1+U_n(z)+\frac{1}{2!}U_n^2(z)+\cdots+\frac{1}{(n-2)!}U_n^{n-2}(z)+V(z)\\
=&1+\P_{\eps,n-2}(z)+\tilde{P}(z)+V(z),
\end{align*}
where $\tilde{P}(z)=0$ for $n=3$ (i.e. $P_1(z)=U_3(z)$ contains all the cubic terms) and otherwise a polynomial of degree $n(n-2)$ with complex coefficients that contain products of the cumulants $\mu_\alpha$ up to $|\alpha|=n$ and powers of $\eps$ at least $n-1$; the Taylor remainder $V(z)$ is given by
\begin{equation*}
V(z)=\frac{1}{(n-2)!}\int_0^1(1-\theta)^{n-2}U_n^{n-1}(z)e^{\theta U_n(z)}\td\theta.
\end{equation*}
For $|z|\lq m^{1/6}=\eps^{-1/3}$, one claims the following bound:
\begin{equation*}
\lv\tilde{P}(z)\rv\lq C_n\kappa_n^{n-2}\eps^{n-1}(|z|^{n+3}+|z|^{3(n-1)}).
\end{equation*}
This can be seen by checking the powers of $\eps$ and $z$ in each $U_n^l(z),~l=1,\cdots,n-2$. For each $l$, the multinomial theorem gives (with multi-indices $\rho\in\NN^{n-2},~\alpha\in\NN^q$)
\begin{equation*}
U_n^l(z)=(-1)^l\sum_{|\rho|=l}\binom{l}{\rho}(i\eps)^{|\rho|_\ast}\prod_{j=1}^{n-2}\lb\sum_{|\alpha|=j+2}\frac{1}{\alpha!}\mu_\alpha z^\alpha\rb^{\rho_j}.
\end{equation*}
Then each monomial of $U_n^l(z)$ is bounded by $C_{n,l}\kappa_n^l\eps^{|\rho|_\ast}|z|^{|\rho|_\ast+2l}$, and the monomials $\tilde{p}_{\rho,l}(z)$ of $\tilde{P}(z)$ correspond to those with $|\rho|_\ast\gq n-1$ and $l\gq2$. When $|\rho|_\ast+2l\lq3(n-1)$ the claim follows immediately from interpolating the powers of $|z|$; when $|\rho|_\ast+2l>3(n-1)$, note that $|\rho|_\ast>|\rho|=l$, and so for $|z|\lq\eps^{-1/3}$,
\begin{equation*}
|\tilde{p}_{\rho,l}(z)|\lq C_{n,l}\kappa_n^l\eps^{\frac{2}{3}(|\rho|_\ast-l)+n-1}|z|^{3(n-1)}\lq C_n\kappa_n^{n-2}\eps^{n-1}|z|^{3(n-1)}.
\end{equation*}
Regarding the Taylor remainder $V(z)$, notice that for $|z|\lq\eps^{-\beta},~\forall\beta\in(0,1/3)$, and $\eps<\kappa_n^{-1}$,
\begin{align*}
|U_n(z)|\lq&\sum_{j=1}^{n-2}\eps^j|z|^{j+2}\kappa_{j+2}\lq\sum_{j=1}^{n-2}\eps^{j-\beta(j-1)}|z|^3\kappa_{j+2}\lq\sum_{j=1}^{n-2}\eps^{\frac{2}{3}(j-1)}\kappa_n^{(j+2)/n}\eps|z|^3\\
\lq&\sum_{j=1}^{n-2}\kappa_n^{\frac{3}{n}+\lb\frac{1}{n}-\frac{2}{3}\rb(j-1)}\eps|z|^3\lq(n-2)\kappa_n^{3/n}\eps|z|^3,
\end{align*}
and furthermore $|U_n(z)|\lq(n-2)\kappa_n^{3/n}\eps^{1-3\beta}$. Thus one arrives at
\begin{equation*}
|V(z)|\lq C_n\kappa_n^3\exp\lb(n-2)\eps^{1-3\beta}\kappa_n^{3/n}\rb\eps^{n-1}|z|^{3(n-1)}.
\end{equation*}
Combining with \eqref{1} one deduces, for $|z|\lq\eps^{-\beta}$,
\begin{align*}
&\lv\psi_m(z)-e^{-\frac{1}{2}z\cdot\Sigma z}\mathcal{P}_\eps^{(n-2)}(z)\rv\\
&\lq\lv e^{\log\psi_m(z)}-e^{-\frac{1}{2}z\cdot\Sigma z+U_n(z)}\rv+\lv e^{-\frac{1}{2}z\cdot\Sigma z+U_n(z)}-e^{-\frac{1}{2}z\cdot\Sigma z}\P_{\eps,n-2}(z)\rv\\
&\lq|\psi_m(z)|\lv1-\exp\lb-\log\psi_m(z)-\frac{1}{2}z\cdot\Sigma z+U_n(z)\rb\rv+e^{-\frac{1}{2}z\cdot\Sigma z}\lb|\tilde{P}(z)|+|V(z)|\rb\\
&\lq C_{n,\tau}|\psi_m(z)|\exp\lb\eps^{2(n-2)/3}\kappa_{n+\tau}^{1+1/n}\rb\eps^{n-1}|z|^{n+1}\kappa_{n+\tau}^{1+1/n}\\
&\quad+C_n\kappa_n^{n-2}\exp\lb(n-2)\eps^{1-3\beta}\kappa_n^{3/n}\rb\eps^{n-1}(|z|^{n+3}+|z|^{3(n-1)})e^{-\frac{1}{2}z\cdot\Sigma z},
\end{align*}
where in the last step the inequality $|1-e^u|\lq e^{|u|}|u|,~\forall u\in\CC$, is used for the first term.

Now with $\delta:=\delta_1\wedge\delta_2$ fixed, for $m$ large one has $m^{\beta/2}<m^{1/2}\delta$. Also, for fixed $\beta\in(0,1/3)$ and $\tau\in(0,1)$, one may further choose $m>\kappa_{n+\tau}^{3(1+1/n)/(n-2)}\vee\kappa_n^{6/(n(1-3\beta))}$ s.t. the exponents in coefficients above are bounded by $1$. This is satisfied when $m>\kappa_{n+\tau}^{\max\{4,6/(n(1-3\beta))\}}$. For $m>\delta^{-3}>\delta^{2/(\beta-1)}$ the first claim still holds, and so the second claim follows.
\end{proof}

In order to bound the integral of the left-hand side term in \eqref{annulus} over all of $\RR^q$, one may assume \textbf{Cram{\'e}r's condition}:
\begin{equation*}
\varlimsup_{|s|\to\infty}|\chi(s)|<1,
\end{equation*}
or equivalently, there exist $\rho>0$ and $\gamma\in(0,1)$ s.t. the following condition holds:

\begin{assumption}{$\mathbf{CC}(\rho,\gamma)$}\label{cramer_interp}
$|\chi(s)|\lq\gamma,~\forall|s|\gq\rho$.
\end{assumption}

As explained in \cite{bhattacharya1976NorAppAsyExp} (page 207), if $\chi$ satisfies Cram{\'e}r's condition, then $|\chi(s)|<1,~\forall s\neq0$; it is satisfied when $X$ has a density by the Riemann-Lebesgue theorem. Discrete distributions are excluded, but some singular and yet non-lattice distributions are also allowed, such as the distribution on the Cantor middle-third set that gives mass $2^{-j}$ to each interval on the $j$-th level.

Given the $X_j$'s satisfying Cram\'er's condition, the following lemma shows that it is also satisfied for the weighted sum $Y_m$.
\begin{lemma}\label{cramer}
Let $\chi$ satisfy \ref{cramer_interp} with $\rho,\gamma$ explicitly known and $\delta\in(0,\rho\wedge1)$. Then $\exists\bar{\gamma}=\bar{\gamma}(\rho,\gamma,\delta)\in(0,1)$ s.t. $|\psi_m(z)|<\bar{\gamma}^m$ for $|z|>m^{1/2}\delta$. 
\end{lemma}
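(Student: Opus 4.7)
The plan is to reduce the claim on $\psi_m$ to a statement about $\chi$ on an annulus, and handle the non-triviality of that bound via Cram\'er's condition and a compactness argument.

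First, I would observe that by definition $\psi_m(z)=\chi(\varepsilon z)^m$ with $\varepsilon=m^{-1/2}$, so $|\psi_m(z)|=|\chi(\varepsilon z)|^m$. For $|z|>m^{1/2}\delta$ this means $|\varepsilon z|>\delta$. Hence it suffices to produce $\bar\gamma=\bar\gamma(\rho,\gamma,\delta)\in(0,1)$ such that
\begin{equation*}
|\chi(s)|\lq\bar\gamma,\qquad\forall\,|s|\gq\delta,
\end{equation*}
since then $|\psi_m(z)|=|\chi(\varepsilon z)|^m\lq\bar\gamma^m$ would follow immediately.

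To construct $\bar\gamma$ I would split the region $\{|s|\gq\delta\}$ into the tail $\{|s|\gq\rho\}$ and the annulus $A:=\{\delta\lq|s|\lq\rho\}$ (non-degenerate because $\delta<\rho$). On the tail, assumption \ref{cramer_interp} directly gives $|\chi(s)|\lq\gamma<1$. On the annulus, $|\chi|$ is continuous and $A$ is compact, so $|\chi|$ attains its supremum at some $s_0\in A$; set $\gamma':=|\chi(s_0)|$. Since $|s_0|\gq\delta>0$, the point $s_0\neq0$, and as noted in the paragraph preceding the lemma Cram\'er's condition implies $|\chi(s)|<1$ for every $s\neq0$, so $\gamma'<1$. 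Taking $\bar\gamma:=\max\{\gamma,\gamma'\}\in(0,1)$ yields the pointwise bound and hence the lemma.

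The only delicate ingredient is the strict inequality $\gamma'<1$ on the annulus, which is not an $\varepsilon$-argument but relies on the ``$|\chi(s)|<1$ for all $s\neq0$'' consequence of Cram\'er's condition already quoted from Bhattacharya--Rao; once this is invoked, the compactness of $A$ converts the pointwise strict inequality into a uniform one and the proof is complete. No further estimation of $\chi$ or moment computation is required, so I would not expect any serious obstacle beyond stating the dependencies of $\bar\gamma$ clearly.
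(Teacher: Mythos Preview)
Your compactness argument does establish that for the \emph{given} $\chi$ there is some $\bar\gamma<1$ with $|\chi(s)|\lq\bar\gamma$ on $\{|s|\gq\delta\}$, and hence $|\psi_m(z)|\lq\bar\gamma^m$ for $|z|>m^{1/2}\delta$. However, the lemma asserts more: the bound $\bar\gamma$ is to be a function of $(\rho,\gamma,\delta)$ \emph{only}, not of the particular characteristic function $\chi$. Your quantity $\gamma'=\max_{\delta\lq|s|\lq\rho}|\chi(s)|$ genuinely depends on $\chi$ beyond the data $(\rho,\gamma)$; two different distributions can satisfy \ref{cramer_interp} with the same $(\rho,\gamma)$ yet have very different maxima on the annulus. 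So ``stating the dependencies clearly'' is not a cosmetic issue here---your argument cannot deliver the claimed dependence, and this dependence is actually used later: in Section~2 the lemma is applied to a whole family $\{\xi_r\}_{r\gq r_0}$ sharing common Cram\'er parameters, and one needs a single $\bar\gamma$ valid for all of them (see the sentence invoking ``a uniform $\bar\gamma=\bar\gamma(\rho,\gamma)$'' in the proof of Theorem~\ref{jump_normal_approx}).

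The paper closes this gap by a quantitative trick rather than compactness: using $|\sin(N\phi)|\lq N|\sin\phi|$ one shows $1-|\chi(s)|\gq N^{-2}\bigl(1-|\chi(Ns)|\bigr)$ for any $N\in\ZZ^+$. Choosing $N=[(\rho+1)/\delta]$ forces $|Ns|\gq\rho$ whenever $\delta<|s|<\rho$, so $|\chi(Ns)|\lq\gamma$ by \ref{cramer_interp}, yielding the explicit
\[
\bar\gamma \;=\; 1-\frac{(1-\gamma)\,\delta^2}{(\rho+1)^2},
\]
which depends only on $(\rho,\gamma,\delta)$ as required. Your reduction $|\psi_m(z)|=|\chi(\eps z)|^m$ and the tail/annulus split are fine; what is missing is precisely this transfer of the tail bound to the annulus with a $\chi$-independent constant.
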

\begin{proof}
Let $N\in\ZZ^+$ and write $\chi(Ns)=|\chi(Ns)|e^{i\theta_1},~\chi(s)=|\chi(s)|e^{i\theta_0}$, where $\theta_1,~\theta_0$ depend on $s$. Then, with $F$ being the distribution of $X$, one gets $\int_{\RR^q}\sin(s\cdot x-\theta_0)F(\td x)=0$ and
\begin{align*}
1-|\chi(s)|=&\int_{\RR^q}\lb1-\cos(s\cdot x-\theta_0)\rb F(\td x)=\int_{\RR^q}2\sin^2\frac{1}{2}(s\cdot x-\theta_0)F(\td x)\\
\gq&\frac{1}{N^2}\int_{\RR^q}2\sin^2\frac{N}{2}(s\cdot x-\theta_0)F(\td x)=\frac{1}{N^2}\int_{\RR^q}\lb1-\cos(Ns\cdot x-N\theta_0)\rb F(\td x),
\end{align*}
where the inequality $|\sin(N\phi)|\lq N|\sin\phi|,~\forall N\in\NN,~\phi\in\RR$, is used. Meanwhile,
\begin{equation*}
|\chi(Ns)|=e^{-i\theta_1}\int_{\RR^q}e^{iNs\cdot x}F(\td x)=e^{i(N\theta_0-\theta_1)}\int_{\RR^q}e^{i(Ns\cdot x-N\theta_0)}F(\td x),
\end{equation*}
which implies
\begin{align*}
1-|\chi(s)|\gq&\frac{1}{N^2}-\frac{1}{N^2}\textrm{Re}\int_{\RR^q}e^{i(Ns\cdot x-N\theta_0)}F(\td x)\nonumber\\
\gq&\frac{1}{N^2}-\frac{1}{N^2}\lv\int_{\RR^q}e^{i(Ns\cdot x-N\theta_0)}F(\td x)\rv=\frac{1}{N^2}-\frac{1}{N^2}|\chi(Ns)|.
\end{align*}
Choose $N=[(\rho+1)/\delta]>\rho/\delta$, then $|\chi(s)|\lq1-(1-\gamma)\delta^2/(\rho+1)^2=:\bar{\gamma}$ for $\delta<|s|<\rho$. Clearly $\bar{\gamma}\gq\gamma$, and $|\psi_m(z)|=|\chi(m^{-1/2}z)|^m<\bar{\gamma}^m<1$ for $|z|>m^{1/2}\delta$.
\end{proof}

From now on the following bounds will be frequently used: $\forall M,c>0$,
\begin{equation}\label{bound_1}
\int_{\RR^q}|x|^Me^{-cx\cdot\Sigma x}\td x=\int_{\RR^q}\lv\Sigma^{-\frac{1}{2}}y\rv^Me^{-c|y|^2}\det\lb\Sigma^{-\frac{1}{2}}\rb\td y\lq C_{q,c,M}(\det\Sigma)^{-\frac{1}{2}}\lambda_1^{-\frac{M}{2}},
\end{equation}
and
\begin{equation}\label{bound_2}
\int_{\RR^q}|x|^M\phi_\Sigma(x)\td x=C_q\int_{\RR^q}\lv\Sigma^\frac{1}{2}y\rv^Me^{-\frac{1}{2}|y|^2}\td y\lq C_{q,M}\lambda_q^\frac{M}{2},
\end{equation}
where the inverse and the square root of $\Sigma$ are well-defined since it is positive definite. 

Although Cram\'er's condition gives some restriction on the law of $X$, it does not require the smoothness or the existence of the density $f_m$ of $Y_m$. In order to see how close the law of $Y_m$ is to the perturbed normal distributions from polynomial expansions, one may use a smoothing argument. Let $\tilde{f}_m$ and $\tilde{\psi}_m$ be the density and characteristic function of the mollified measure $\prb_m\ast\theta_m$, where $\theta_m$ is a measure with smooth density, still denoted by $\theta_m$ or $\theta_\eps$:
\begin{equation}\label{theta_def}
\theta_\eps(x)=\eps^{-q(n+1)}h(\eps^{-n-1}x),
\end{equation}
for some function $0\lq h\in C_0^\infty(\RR^q)$ supported on the open unit ball and $\int_{\RR^q}h(x)\td x=1$. Thus $\theta_\eps$ is a probability density supported on $\{|x|<\eps^{n+1}\}$. Write $\hat{h}$ and $\hat{\theta}_\eps$ as their respective Fourier transforms.

\begin{proposition}\label{prop}
Under the assumptions in Lemma \ref{char_estimate_1} and Lemma \ref{cramer}, for any integer $n\gq3,~\tau\in(0,1),~\beta\in(0,1/3)$ and $m$ sufficiently large, it holds true that
\begin{equation*}
\int_{\RR^q}\lv\tilde{\psi}_m(z)-e^{-\frac{1}{2}z\cdot\Sigma z}\P_{\eps,n-2}(z)\rv\td z\lq C_{q,n,\tau}(\det\Sigma)^{-\frac{1}{2}}\lambda_1^{-\frac{n-1}{2\beta}}\kappa_{n+\tau}^{n-2}\eps^{n-1}.
\end{equation*}
\end{proposition}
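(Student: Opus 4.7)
The plan is to split $\RR^q$ into three regions corresponding to the regimes distinguished by Lemmas \ref{char_estimate_1} and \ref{cramer}: an inner disk $D_1=\{|z|\lq m^{\beta/2}\}$ where the polynomial expansion \eqref{annulus} holds, a middle annulus $D_2=\{m^{\beta/2}<|z|\lq m^{1/2}\delta\}$ where only the Gaussian majorant $|\psi_m(z)|\lq e^{-\tfrac{1}{4}z\cdot\Sigma z}$ is available, and a far tail $D_3=\{|z|>m^{1/2}\delta\}$ where Cram\'er's condition yields $|\psi_m(z)|\lq\bar\gamma^m$. Throughout I would use $\tilde\psi_m(z)=\psi_m(z)\hat\theta_\eps(z)$ together with $\|\hat\theta_\eps\|_\infty\lq1$; the Schwartz decay of $\hat\theta_\eps(z)=\hat h(\eps^{n+1}z)$ is only needed on $D_3$.

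On $D_1$, the natural decomposition is
$$\tilde\psi_m-e^{-\tfrac{1}{2}z\cdot\Sigma z}\P_{\eps,n-2}=\bigl(\psi_m-e^{-\tfrac{1}{2}z\cdot\Sigma z}\P_{\eps,n-2}\bigr)\hat\theta_\eps+e^{-\tfrac{1}{2}z\cdot\Sigma z}\P_{\eps,n-2}\bigl(\hat\theta_\eps-1\bigr).$$
The first summand is controlled pointwise by \eqref{annulus} (absorbing $\hat\theta_\eps$), and integrating the polynomial-times-Gaussian via \eqref{bound_1} gives a contribution of order $\kappa_{n+\tau}^{n-2}(\det\Sigma)^{-\tfrac{1}{2}}\lambda_1^{-3(n-1)/2}\eps^{n-1}$. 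For the second summand, $\mathrm{supp}(\theta_\eps)\subset\{|x|\lq\eps^{n+1}\}$ with $\int\theta_\eps=1$ yields $|\hat\theta_\eps(z)-1|\lq\eps^{n+1}|z|$, and the coefficient bound $|P_k|\lq C_k\kappa_{k+2}^k\lq C_k\kappa_n^{n-2}$ for $k\lq n-2$ (since $\kappa_M\gq1$ is nondecreasing in $M$) leads to an $O(\eps^{n+1})$ contribution, strictly better than required.

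On $D_2$, both $|\tilde\psi_m|\lq e^{-\tfrac{1}{4}z\cdot\Sigma z}$ and $|e^{-\tfrac{1}{2}z\cdot\Sigma z}\P_{\eps,n-2}|$ are Gaussian tails weighted by polynomials in $|z|$. Substituting $y=\Sigma^{1/2}z$ and noting that $\{|z|>m^{\beta/2}\}$ is contained in $\{|y|>\lambda_1^{1/2}m^{\beta/2}\}$ (since $|\Sigma^{-1/2}y|\lq\lambda_1^{-1/2}|y|$) reduces everything to Gaussian tails over $|y|$-balls; applying the elementary bound $e^{-u^2/8}\lq C_M u^{-M}$ at $u=\lambda_1^{1/2}m^{\beta/2}$ with $M=(n-1)/\beta$ produces \emph{exactly} the factor $\lambda_1^{-(n-1)/(2\beta)}\eps^{n-1}$ in the statement --- this is the one step where the sharp $\lambda_1$-exponent arises. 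On $D_3$, $|\tilde\psi_m|\lq\bar\gamma^m|\hat\theta_\eps|$ integrates to at most $C_q\bar\gamma^m\|\hat h\|_{L^1(\RR^q)}\eps^{-q(n+1)}$, which decays faster than any power of $\eps$; the Gaussian polynomial term on $D_3$ decays like $e^{-c\lambda_1 m\delta^2}$.

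The main obstacle I anticipate is reconciling the various $\lambda_1$-exponents: $D_1$ yields $\lambda_1^{-3(n-1)/2}$ while $D_2$ yields $\lambda_1^{-(n-1)/(2\beta)}$. Since $\beta<1/3$ forces $1/(2\beta)>3/2$ and $\lambda_1\lq1$ makes higher inverse powers stronger, the $D_2$ exponent dominates the $D_1$ one, allowing a single uniform bound. The only other routine-but-delicate calculation is tracking the $\kappa_{n+\tau}$-factor through the combinatorics of $\P_{\eps,n-2}$ (monomials of degree up to $3(n-2)$ with coefficients built from cumulants of order up to $n$), along the same H\"older-interpolation lines already used in the proof of Lemma \ref{char_estimate_1}.
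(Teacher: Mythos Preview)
Your proposal is correct and follows essentially the same three-region decomposition as the paper's proof, with the same mechanism on each region: Lemma \ref{char_estimate_1} plus the mollifier error on $D_1$, Gaussian-tail extraction of $\eps^{n-1}$ on $D_2$, and Cram\'er plus the Schwartz decay of $\hat\theta_\eps$ on $D_3$. The only cosmetic differences are that the paper handles $D_2$ by inserting the factor $(\eps|z|^{1/\beta})^{n-1}>1$ directly and then invoking \eqref{bound_1} (rather than your change of variables $y=\Sigma^{1/2}z$ followed by a pointwise tail bound), and on $D_3$ the paper truncates $|\hat\theta_\eps|$ at the specific power $K=q+1$ instead of your cleaner use of $\|\hat h\|_{L^1(\RR^q)}<\infty$; both variants lead to the same final estimate.
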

\begin{proof}
Note that $\tilde{\psi}_m=\psi_m\hat{\theta}_\eps$, and for $|z|\lq m^{1/2}\delta$,
\begin{align*}
\lv\tilde{\psi}_m(z)-\psi_m(z)\rv=&|\psi_m(z)|\lv\hat{\theta}_\eps(z)-1\rv\lq|\psi_m(z)|\int_{|x|<\eps^{n+1}}\lv e^{iz\cdot x}-1\rv\theta_\eps(x)\td x\\
\lq&|\psi_m(z)||z|\eps^{n+1}\lq \eps^{n+1}|z|e^{-\frac{1}{4}z\cdot\Sigma z},
\end{align*}
and hence by Lemma \ref{char_estimate_1} and triangle inequality,
\begin{equation*}
\lv\tilde{\psi}_m(z)-e^{-\frac{1}{2}z\cdot\Sigma z}\mathcal{P}_\eps^{(n-2)}(z)\rv\lq C_{n,\tau}\eps^{n-1}\kappa_{n+\tau}^{n-2}\lb|z|^{n+1}+|z|^{3(n-1)}\rb e^{-\frac{1}{4}z\cdot\Sigma z},
\end{equation*}
for $|z|\lq m^{\beta/2}$. Also for all $z\in\RR^q$,
\begin{align}
\lv\hat{\theta}_\eps(z)\rv=&\lv\int_{|x|<\eps^{n+1}}e^{iz\cdot x}\theta_\eps(x)\td x\rv=\lv\int_{|x|<\eps^{n+1}}e^{iz\cdot x}\eps^{-q(n+1)}h(\eps^{-n-1}x)\td x\rv\nonumber\\
=&\lv\int_{|y|<1}e^{i\eps^{n+1}z\cdot y}h(y)\td y\rv=\lv\hat{h}(\eps^{n+1}z)\rv\lq C_q\eps^{-K(n+1)}|z|^{-K},\label{K}
\end{align}
for any $K>0$, since $h\in C_0^\infty(\RR^q)$ with all the derivatives in $L^1(\RR^q)$. One may choose $K=q+1$ for convenience and $|\tilde{\psi}_m(z)|\lq\bar{\gamma}^m\min\{1,C_q\eps^{-(q+1)(n+1)}|z|^{-q-1}\}$ for $|z|>m^{1/2}\delta$. For $|z|\lq m^{1/2}\delta$ one still has $|\tilde{\psi}_m(z)|\lq\exp(-\frac{1}{4}z\cdot\Sigma z)$.

Given all the estimates for $\tilde{\psi}_m(z)$ on different domains, one can split the integral in question into three parts:
\begin{align*}
\tilde{I}:=&\int_{\RR^q}\lv\tilde{\psi}_m(z)-e^{-\frac{1}{2}z\cdot\Sigma z}\mathcal{P}_\eps^{(n-2)}(z)\rv\td z\\
=&\lb\int_{|z|\lq m^{\beta/2}}+\int_{m^{\beta/2}<|z|\lq m^{1/2}\delta}+\int_{|z|> m^{1/2}\delta}\rb\lv\tilde{\psi}_m(z)-e^{-\frac{1}{2}z\cdot\Sigma z}\mathcal{P}_\eps^{(n-2)}(z)\rv\td z.
\end{align*}
Then by virtue of Lemma \ref{char_estimate_1}, Lemma \ref{cramer},
\begin{align*}
\tilde{I}&\lq C_{n,\tau}\kappa_{n+\tau}^{n-2}\eps^{n-1}\int_{|z|\lq m^{\beta/2}}\lb|z|^{n+1}+|z|^{3(n-1)}\rb e^{-\frac{1}{4}z\cdot\Sigma z}\td z+\int_{m^{\beta/2}<|z|\lq m^\frac{1}{2}\delta}e^{-\frac{1}{4}z\cdot\Sigma z}\td z\\
&+\int_{|z|>m^{1/2}\delta}\bar{\gamma}^m\lb1\wedge C_q\eps^{-(q+1)(n+1)}|z|^{-q-1}\rb\td z+\int_{|z|>m^{\beta/2}}e^{-\frac{1}{2}z\cdot\Sigma z}|\P_{\eps,n-2}(z)|\td z.
\end{align*}
Use \eqref{bound_1} for the first integral, combine the second and the fourth, and split the third into the set where $|z|$ is large and its complement to get ($\Lambda$ denotes the Lebesgue measure on $\RR^q$)
\begin{align*}
\tilde{I}\lq&C_{q,n,\tau}(\det\Sigma)^{-\frac{1}{2}}\lambda_1^{-\frac{3}{2}(n-1)}\kappa_{n+\tau}^{n-2}\eps^{n-1}+\bar{\gamma}^m\Lambda\lb\{|z|\lq C_q\eps^{-n-1}\}\rb\\
&+C_q\bar{\gamma}^m\eps^{-(q+1)(n+1)}\int_{|z|>C_q\eps^{-n-1}}|z|^{-q-1}\td z+2\int_{|z|>m^{\beta/2}}e^{-\frac{1}{4}z\cdot\Sigma z}\lb1+\sum_{k=1}^{n-2}\eps^k\lv P_k(z)\rv\rb\td z\\
\lq&C_{q,n,\tau}(\det\Sigma)^{-\frac{1}{2}}\lambda_1^{-\frac{3}{2}(n-1)}\kappa_{n+\tau}^{n-2}\eps^{n-1}+C_q\bar{\gamma}^m\eps^{-(q+1)(n+1)}\\
&+C_{q,n}\int_{|z|>m^{\beta/2}}e^{-\frac{1}{4}z\cdot\Sigma z}\kappa_n^{n-2}\lb1+\sum_{k=1}^{n-2}\eps^k|z|^{3k}\rb\td z.
\end{align*}
The second term can be absorbed by the first term if $m$ is sufficiently large s.t. it satisfies the criterion of Lemma \ref{char_estimate_1} and that
\begin{equation}\label{large_m}
\bar{\gamma}^mm^{\frac{1}{2}(q+1)(n+1)}\lq(\det\Sigma)^{-\frac{1}{2}}\lambda_1^{-\frac{3}{2}(n-1)}\kappa_{n+\tau}^{n-2}.
\end{equation}
For the third term, notice that $|z|>1$ and that $1<\eps|z|^{1/\beta},~\forall\beta\in(0,1/3)$. Thus
\begin{equation*}
\tilde{I}\lq C_{q,n,\tau}(\det\Sigma)^{-\frac{1}{2}}\lambda_1^{-\frac{3}{2}(n-1)}\kappa_{n+\tau}^{n-2}\eps^{n-1}+C_{q,n}\kappa_n^{n-2}\int_{\RR^q}e^{-\frac{1}{4}z\cdot\Sigma z}(\eps|z|^{1/\beta})^{n-1}\td z,
\end{equation*}
and the result follows from \eqref{bound_1} again.
\end{proof}

\subsection{Perturbed Normal Distributions}

Now given Proposition \ref{prop}, one can approximate the density $\tilde{f}_m$ by the inverse Fourier transform $\F^{-1}$ of $\exp\lb-\frac{1}{2}z\cdot\Sigma z\rb\P_\eps(z)$. Define, $\forall x\in\RR^q$, the Edgeworth polynomials $\{Q_k\}$ by
\begin{equation}\label{edgeworth}
\phi_\Sigma(x)Q_k(x):=\F^{-1}\left\{e^{-\frac{1}{2}z\cdot\Sigma z}P_k(z)\right\}(x),~\forall k\in\ZZ^+,
\end{equation}
and accordingly $\Q_{\eps,r}:=1+\sum_{k=1}^r\eps^kQ_k,~\forall r\in\ZZ^+$. Then each monomial of $Q_k$ has the same degree as that of $P_k$. In fact, if $\Sigma=\mathrm{diag}(\lambda_1,\cdots,\lambda_q)$, one can explicitly show that
\begin{equation}\label{hermite}
Q_k(x)=\sum_{|\alpha|=k+2}^{3k}(-1)^{|\alpha|}b_\alpha\prod_{j=1}^q\lambda_j^{-\alpha_j/2}H_{\alpha_j}\lb\lambda_j^{-1/2}x_j\rb,
\end{equation}
where $b_\alpha=b_\alpha(\mu_\beta:~|\beta|\lq k+2)$ is the real coefficient of $(iz)^\alpha$ in $P_k(z)$ satisfying $|b_\alpha|\lq\kappa_{k+2}^k$, and $H_j$ is the Hermite polynomial of degree $j$. See \cite{petrov1975SumIndRanVar} (Chapter VI \S1) for the precise values.

\begin{remark}\label{moment_match}
Since $\exp(-\frac{1}{2}z\cdot\Sigma z)\P_{\eps,n-2}(z)$ and $\psi_m(z)$ have the same derivatives at $0$ up to order $n$, the Edgeworth sum $\phi_\Sigma\Q_{\eps,n-2}$ and $Y_m$ have the same moments up to order $n$. 
\end{remark}

For a positive-definite $q\times q$ matrix $\Sigma$, let $\mathscr{P}_\Sigma$ be the set of polynomials $S:\RR^q\to\RR$ s.t. $\int_{\RR^q}S_j(x)\phi_\Sigma(x)\td x=0$ and $\mathscr{P}_G$ be the set of polynomials $U:\RR^q\to\RR^q$ s.t. $U=\nabla u$ for some polynomial $u:\RR^q\to\RR$. Furthermore let $\mathscr{P}_\Sigma^\infty$ be the set of sequences $(S_1,S_2,\cdots),~S_j\in\mathscr{P}_\Sigma$, and $\mathscr{P}_G^\infty$ be the set of sequences $(U_1,U_2,\cdots),~U_j\in\mathscr{P}_G$.

Given polynomials $U_j:\RR^q\to\RR^q,~j=1,\cdots,k$, define $\mathbf{U}_{\eps,k}(x):=x+\sum_{j=1}^k\eps^jU_j(x),~\forall\eps>0$. Then for a $\xi_\Sigma$ following $\N(0,\Sigma)$, the random variable $\mathbf{U}_{\eps,k}(\xi_\Sigma)$ is said to have a perturbed normal distribution, whose density can be formally expressed as
\begin{equation*}
\zeta_{\eps,k}(y)=\det\lb\tD\mathbf{U}_{\eps,k}^{-1}(y)\rb\phi_\Sigma\lb\mathbf{U}_{\eps,k}^{-1}(y)\rb.
\end{equation*}
Davie \cite{davie2015Polpernordis} (Section 2) showed, using a recursive construction, that one can approximate $\zeta_{\eps,k}(y)$ by $\phi_\Sigma(y)\mathcal{S}_{\eps,l}(y):=\phi_\Sigma(y)\lb1+\sum_{j=1}^l\eps^jS_j(y)\rb$ up to order $O(\eps^{l+1})$, where for each $j\lq l,~S_j:\RR^q\to\RR$ is a polynomial uniquely determined by $U_1,\cdots,U_j$ only. Since $l$ is arbitrary, for each $k$ the polynomials $U_1,\cdots,U_k$ uniquely determine a sequence $(S_1,S_2,\cdots)$, and hence the map $\mathfrak{S}_\Sigma:(U_1,U_2\cdots)\mapsto(S_1,S_2,\cdots)$ is well-defined. Moreover, each $S_j\in\mathscr{P}_\Sigma$ by Lemma 1 in \cite{davie2015Polpernordis}.

A given sequence $(S_1,S_2,\cdots)\in\mathscr{P}_\Sigma^\infty$ can have several preimages under $\mathfrak{S}_\Sigma$. But according to Lemma 2 in \cite{davie2015Polpernordis}, if one restricts $\mathfrak{S}_\Sigma$ on $\mathscr{P}_G^\infty$ then it is a bijection\footnote{This is motivated by Brenier's transport theorem for the quadratic cost - see Theorem 2.12 in \cite{villani2003TopOptTra} for the general statement and Lemma 5 in \cite{davie2015Polpernordis} for a special case.}. As is shown in the preceding paragraphs therein, this follows from the bijectivity of the linear map
\begin{equation*}
\mathscr{L}_\Sigma:\mathscr{P}_G\to\mathscr{P}_\Sigma,~U(x)\mapsto\nabla\cdot U(x)-x\cdot\Sigma^{-1}U(x).
\end{equation*}
The preimages of the bijection $\mathfrak{S}_\Sigma$ are defined inductively in the following way: given a sequence $(S_1,S_2,\cdots)\in\mathscr{P}_\Sigma^\infty$, suppose $U_1,\cdots,U_k\in\mathscr{P}_G$ are found with $\mathfrak{S}_\Sigma(U_1,\cdots,U_k)=(S_1,\cdots,S_k,\tilde{S}_{k+1},\cdots)$, then adding an additional $U_{k+1}$ gives a different sequence
\begin{equation*}
\mathfrak{S}_\Sigma(U_1,\cdots,U_k,U_{k+1})=(S_1,\cdots,S_k,\tilde{S}_{k+1}-\mathscr{L}_\Sigma U_{k+1},\cdots).
\end{equation*}
This means that $U_{k+1}\in\mathscr{P}_G$ is determined by the equation $\tilde{S}_{k+1}-\mathscr{L}_\Sigma U_{k+1}=S_{k+1}$. Writing $U_{k+1}=\nabla u_{k+1}$, one looks for a polynomial $u_{k+1}$ that solves the Hermite-type equation
\begin{equation}\label{PDE}
-\Delta u_{k+1}(x)+x\cdot\Sigma^{-1}\nabla u_{k+1}(x)=S_{k+1}(x)-\tilde{S}_{k+1}(x),~x\in\RR^q.
\end{equation}
For the initial step set $\tilde{S}_1\equiv0$ and solve the PDE by induction on the degree of $u_1$; at each step, first compute $\tilde{S}_{k+1}$ from $u_1,\cdots,u_k$ and then solve the PDE again by induction on the degree of $u_{k+1}$ - see similar arguments presented in the proof of Lemma 1 in \cite{davie2014Patappstodifequusicou}.

The computation of $\tilde{S}_{k+1}(x)$ can be done in the following formal way. First write
\begin{equation}\label{zeta}
\phi_\Sigma(x)=\zeta_{\eps,k}\lb\mathbf{U}_{\eps,k}(x)\rb\det\lb\tD\mathbf{U}_{\eps,k}(x)\rb,
\end{equation}
by a change of variables. With $U_j=\nabla u_j,~\tD\mathbf{U}_{\eps,k}(x)=I+\sum_{j=1}^k\eps^j\tD^2u_j(x)$, and so the determinant above can be expressed as $1+\eps v_1(x)+\cdots+\eps^{qk}v_{qk}(x)$, where for each $l\lq qk,~v_l$ is the sum of $(\partial_{i_1j_1}^2u_1)^{\rho_1}\cdots(\partial_{i_kj_k}^2u_k)^{\rho_k}$ over all the second derivatives and all multi-indices $\rho\in\NN^k$ s.t. $|\rho|_\ast=l$. Then by formally writing $\zeta_{\eps,k}(y)=\phi_\Sigma(y)\tilde{\mathcal{S}}_\eps(y)$ with $y=\mathbf{U}_{\eps,k}(x)$ and $\tilde{\mathcal{S}}_\eps(y)=1+\sum_{j=1}^k\eps^jS_j(y)+\sum_{j=k+1}^\infty\eps^j\tilde{S}_j(y)$, one can rearrange \eqref{zeta} to get
\begin{align*}
&1+\eps S_1(y)+\cdots+\eps^kS_k(y)+\eps^{k+1}\tilde{S}_{k+1}(y)+\cdots\\
&=\frac{\exp\left\{\sum_{j=1}^k\eps^jx\cdot\Sigma^{-1}\nabla u_j(x)+\frac{1}{2}\sum_{j_1,j_2=1}^k\eps^{j_1+j_2}\nabla u_{j_1}(x)\cdot\Sigma^{-1}\nabla u_{j_2}(x)\right\}}{1+\eps v_1(x)+\cdots+\eps^{qk}v_{qk}(x)}\\
&=1+\eps T_1(x)+\eps^2T_2(x)+\cdots,
\end{align*}
where the series on the right-hand side is obtained by multiplying out the Maclaurin series for $e^z$ and $1/(1+z)$. Since differentiating a polynomial only changes its coefficients by a constant and reduces its degree, $|T_{k+1}(x)|\lq C_{q,k}\|\Sigma^{-1}\|^{k+1}\sum_{|\rho|_\ast=k+1}(1+|u_1(x)|)^{\rho_1}\cdots(1+|u_k(x)|)^{\rho_k}$.

On the left-hand side, each polynomial $S_j(y)$ with degree $d_j\gq1$ can be expressed as $S_j(x)+\eps w_{j,1}(x)+\cdots+\eps^{d_jk}w_{j,d_jk}(x)$ by its Taylor expansion about $x$. Since all the derivatives of $S_j(x)$ have their norms bounded by $C_{q,j}(1+|S_j(x)|)$, one has, for each $j\lq k$ and $l\lq d_jk$, that $|w_{j,l}(x)|\lq C_{q,j,l}\sum_{s=1}^l\sum_{|\rho|_\ast=l}(1+|S_j(x)|)|U_1(x)|^{\rho_1}\cdots|U_s(x)|^{\rho_s}$. Thus, by expanding out $\tilde{S}_{k+1}(y)$ in terms of $x$ and matching the coefficients of $\eps^{k+1}$ on both sides, one gets
\begin{equation}\label{formal}
\tilde{S}_{k+1}(x)=T_{k+1}(x)-w_{k,1}(x)-w_{k-1,2}(x)-\cdots-w_{2,k-1}(x)-w_{1,k}(x).
\end{equation}
Although the calculation for $\tilde{S}_{k+1}$ above is completely formal, it is equivalent to Davie's construction in \cite{davie2015Polpernordis} due to the uniqueness of the power series expansion. For a rigorous proof of such an approximation of $\zeta_{\eps,k}$, the reader is referred to Proposition 1 in \cite{davie2015Polpernordis}.

\begin{remark}\label{rotation}
The set $\mathscr{P}_G$ is invariant under orthogonal transformation: given $U(x)\in\mathscr{P}_G$ and an orthogonal matrix $A$, the polynomial $G(x)=A^{-1}U(Ax)$ also lies in $\mathscr{P}_G$.
\end{remark}
To see this, notice that if $U(x)=\nabla u(x)$ and $A$ is a $q\times q$ matrix, then $g(x):=u(Ax)$ has gradient $A^\top U(Ax)$ and so $G(x)=\nabla u(Ax)$ if $A$ is orthogonal.

The following lemma is a quantitative application of Proposition 1 in \cite{davie2015Polpernordis}.

\begin{lemma}\label{bijection}
The real polynomials $\{Q_k\}_{k=1}^\infty$ uniquely determine a sequence of polynomials $\{p_k\}_{k=1}^\infty\in\mathscr{P}_G^\infty$ s.t. $\forall r\in\ZZ^+$ and $\eps$ sufficiently small,
\begin{description}
\item[(i)]$|p_k(x)|\lq C_{q,k}\lambda_1^{-5k(k+2)}\lambda_q^{1+\frac{5}{2}k(k+2)}\kappa_{r+2}^{k^2}\lb1+|x|^{3k}\rb$ for all $k=1,\cdots,r$ and $x\in\RR^q$;
\item[(ii)]The random variable $\mathbf{p}_{\eps,r}(\xi_\Sigma):=\xi_\Sigma+\sum_{k=1}^r\eps^kp_k(\xi_\Sigma)$ has density $\zeta_{\eps,r}(x)=\phi_\Sigma(x)\Q_{\eps,r}(x)+R_{\eps,r}(x)$, where for any $M\gq1$,
\begin{equation*}
\int_{\RR^q}|x|^M|R_{\eps,r}(x)|\td x\lq C_{q,r,M}\lambda_1^{-5(r+1)(r+2)}\lambda_q^{\frac{5}{2}(r+1)(r+3)+\frac{M}{2}}\kappa_{r+2}^{(r+1)^2}\eps^{r+1}.
\end{equation*}
\end{description}
\end{lemma}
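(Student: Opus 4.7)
The plan is to apply Davie's bijection $\mathfrak{S}_\Sigma\colon \mathscr{P}_G^\infty \to \mathscr{P}_\Sigma^\infty$ to the Edgeworth sequence $(Q_1,Q_2,\ldots)$, then read off quantitative bounds from the explicit recursive construction already spelled out in the excerpt. First I check that $Q_k\in\mathscr{P}_\Sigma$ for every $k$: by \eqref{hermite} each monomial of $Q_k$ is a product of Hermite polynomials with $|\alpha|\gq k+2\gq 3$, so at least one factor $H_{\alpha_j}$ with $\alpha_j\gq 1$ is present, and the product integrates to zero against $\phi_\Sigma$ by Hermite orthogonality. Consequently $\mathfrak{S}_\Sigma$ produces a unique sequence $\{p_k\}\in\mathscr{P}_G^\infty$ satisfying $\mathfrak{S}_\Sigma(p_1,p_2,\ldots)=(Q_1,Q_2,\ldots)$, built inductively by computing $\tilde S_k$ from $p_1,\ldots,p_{k-1}$ via \eqref{formal} and solving the Hermite-type PDE
$$\mathscr{L}_\Sigma p_k = Q_k - \tilde S_k$$
for $p_k=\nabla u_k\in\mathscr{P}_G$, whose solvability is the content of \eqref{PDE}.

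For part (i) I argue by induction on $k$. The key quantitative input is a solvability estimate for the linear map $\mathscr{L}_\Sigma^{-1}\colon\mathscr{P}_\Sigma\to\mathscr{P}_G$: after orthogonal conjugation to diagonalise $\Sigma$ (permissible by Remark \ref{rotation}), the operator is diagonal in the Hermite basis and inverts a Hermite mode of degree $d$ into a polynomial of the same degree with coefficient inflation of order $\lambda_1^{-d/2}$ per step. The base datum $Q_k$ satisfies $|Q_k(x)|\lesssim \lambda_1^{-3k}\kappa_{k+2}^k(1+|x|^{3k})$ from \eqref{hermite}, using the coefficient bound $|b_\alpha|\lq\kappa_{k+2}^k$, $|\alpha|\lq 3k$, and $\lambda_j\gq\lambda_1$. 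The correction $\tilde S_k$ in \eqref{formal} is built from products and Taylor expansions of $p_1,\ldots,p_{k-1}$ together with their second derivatives; by the inductive hypothesis its bound has the same shape, with exponents of $\lambda_1^{-1}$ and $\kappa_{r+2}$ strictly smaller than those appearing in (i). A further application of $\mathscr{L}_\Sigma^{-1}$ then yields $p_k$ with the stated bound, provided the exponents $5k(k+2)$ and $k^2$ are chosen generously enough to absorb the growth produced by a single recursive step.

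For part (ii) I invoke Proposition 1 of \cite{davie2015Polpernordis}: for $\eps$ sufficiently small, $\tD\mathbf{p}_{\eps,r}$ is uniformly invertible so $\mathbf{p}_{\eps,r}$ is a global $C^\infty$-diffeomorphism of $\RR^q$, and the density $\zeta_{\eps,r}$ of $\mathbf{p}_{\eps,r}(\xi_\Sigma)$ agrees with $\phi_\Sigma\Q_{\eps,r}$ through order $\eps^r$ by construction of $\{p_k\}$ via the formal identity leading to \eqref{formal}. Thus $R_{\eps,r}(x)=\zeta_{\eps,r}(x)-\phi_\Sigma(x)\Q_{\eps,r}(x)$ equals $\phi_\Sigma(x)$ multiplied by a polynomial expression of leading order $\eps^{r+1}$ in the $p_k$'s and their first two derivatives (the truncated tail of the series written after \eqref{zeta}). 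Bounding this polynomial via part (i), integrating against $|x|^M$ using the Gaussian estimate \eqref{bound_2} (which produces the $\lambda_q^{M/2}$ factor plus an extra $\lambda_q^{3(r+1)/2}$ from the leading $|x|^{3(r+1)}$ power in the $p_k$-estimate), and collecting the exponents gives the claimed bound.

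The main obstacle is the bookkeeping of constants. Each inversion of $\mathscr{L}_\Sigma$ adds a negative power of $\lambda_1$ proportional to the degree of the polynomial being inverted, while the formal expansion \eqref{formal} creates products of previously-built polynomials; iterating these two mechanisms forces the exponents of $\lambda_1^{-1}$ and $\kappa_{r+2}$ to grow quadratically in the step index, which is why $5k(k+2)$ and $k^2$ appear. Verifying that this generous choice survives every stage of the induction, and specifying the smallness of $\eps$ (itself determined by those bounds) that ensures $\tD\mathbf{p}_{\eps,r}$ is uniformly invertible so Davie's Proposition 1 applies, is the technical heart of the argument.
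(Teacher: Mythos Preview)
Your proposal follows essentially the same route as the paper: verify $Q_k\in\mathscr{P}_\Sigma$, invoke Davie's bijection $\mathfrak{S}_\Sigma$, run an induction on $k$ bounding $\tilde S_k$ (the paper's $\tilde Q_k$) via \eqref{formal} and inverting $\mathscr{L}_\Sigma$ in the Hermite basis, then appeal to Proposition~1 of \cite{davie2015Polpernordis} (applied one order higher, with the extra $\tilde Q_{r+1}$ term) for part~(ii). One small correction to your account of the mechanism: the positive powers of $\lambda_q$ in (i) arise because the Hermite eigenvalue $\nu_\alpha=\sum_j\alpha_j\lambda_j^{-1}$ is bounded \emph{below} by $|\alpha|/\lambda_q$, so $\nu_\alpha^{-1}$ contributes a factor $\lambda_q$ at each inversion (with further $\lambda_q$ powers entering via the $L^2(\phi_\Sigma)$ estimate \eqref{bound_2} on the Hermite coefficients $c_\alpha$), whereas the $\lambda_1^{-|\alpha|/2}$ factors you mention come separately from the scaling in the Hermite polynomials $H_{\alpha_j}(\lambda_j^{-1/2}x_j)$.
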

\begin{proof}
First of all, the Edgeworth polynomials $\{Q_k\}$ defined by \eqref{edgeworth} are orthogonal to $\phi_\Sigma$:
\begin{equation*}
\int_{\RR^q}\phi_\Sigma(x)Q_k(x)\td x=\widehat{\phi_\Sigma Q_k}(0)=1\cdot P_k(0)=0.
\end{equation*}
Thus $\{Q_k\}\in\mathscr{P}_\Sigma^\infty$, and hence $\{p_k\}:=\mathfrak{S}_\Sigma^{-1}(\{Q_k\})$ gives the sequence sought after; for a fixed $r,~p_1,\cdots,p_r$ are determined by $Q_1,\cdots,Q_r$ only. Moreover, if $\mathfrak{S}_\Sigma(p_1,\cdots,p_r)=(Q_1,\cdots,Q_r,\tilde{Q}_{r+1},\cdots)$, then the density $\zeta_{\eps,r}$ of $\mathbf{p}_{\eps,r}(\xi_\Sigma)$ can be approximated by the expansion $\phi_\Sigma(\Q_{\eps,r}+\eps^{r+1}\tilde{Q}_{r+1})$ according to Proposition 1 in \cite{davie2015Polpernordis}. More precisely, $\forall M\gq1$,
\begin{equation}\label{density_approx}
\int_{\RR^q}|x|^M|\zeta_{\eps,r}(x)-\phi_\Sigma(x)(\Q_{\eps,r}(x)+\eps^{r+1}\tilde{Q}_{r+1}(x))|\td x\lq C_{q,r,M}K_r^{N_r}\eps^{r+2},
\end{equation}
where $K_r$ is an upper bound for $\|\Sigma\|,\|\Sigma^{-1}\|$ and the absolute value of the coefficients of $p_1,\cdots,p_r$, and $N_r=N_r(q,M)>0$ is a constant depending on the maximum degree of $p_1,\cdots,p_r$. Then for $\eps\lq K_r^{-N_r}$ this bound can be brought down to $C_{q,r,M}\eps^{r+1}$, and it remains to find an upper bound for $\tilde{Q}_{r+1}$ to derive the estimates in question.

For all $k\lq r$, write $p_k=\nabla u_k$ where $u_k$ satisfies \eqref{PDE} with $S_k\equiv Q_k$ and $\tilde{S}_k\equiv\tilde{Q}_k$. Assume that $\Sigma$ is diagonal. Then by \eqref{hermite}, $\forall k,x$ one has $|Q_k(x)|\lq C_{q,k}\lambda_1^{-3k}\kappa_{k+2}^k(1+|x|^{3k})$. Now one can bound the polynomials $\{\tilde{Q}_k\}$ and $\{u_k\}$ inductively. For each $k\lq r-1$ suppose that (i) holds true for all $j\lq k$:
\begin{equation*}
|u_j(x)|\lq C_{q,j}\lambda_1^{-5j(j+2)}\lambda_q^{1+\frac{5}{2}j(j+2)}\kappa_{r+2}^{j^2}(1+|x|^{3j}).
\end{equation*}
From the construction of $T_{k+1}$ and $\{w_{j,l}\}$ one sees that,
\begin{align*}
|T_{k+1}(x)|\lq&C_{q,k}\|\Sigma^{-1}\|^{k+1}\lambda_1^{-5\sum^\ast j(j+2)\rho_j}\lambda_q^{k+1+\frac{5}{2}\sum^\ast j(j+2)\rho_j}\kappa_{r+2}^{\sum^\ast j^2\rho_j}\lb1+|x|^{\sum^\ast3j\rho_j}\rb\\
\lq&C_{q,k}\lambda_1^{-(k+1)(5k+11)}\lambda_q^{(k+1)\lb\frac{5}{2}k+6\rb}\kappa_{r+2}^{(k+1)^2}\lb1+|x|^{3(k+1)}\rb,\num\label{induction}\\
|w_{j,l}(x)|\lq&C_{q,j,l}\lambda_1^{-3j-5\sum^\dag s(s+2)\rho_s}\lambda_q^{l+\frac{5}{2}\sum^\dag s(s+2)\rho_s}\kappa_{r+2}^{j+\sum^\dag s^2\rho_s}\lb1+|x|^{3j+\sum^\dag3s\rho_s}\rb\\
\lq&C_{q,j,l}\lambda_1^{-3j-5l(l+2)}\lambda_q^{l+\frac{5}{2}l(l+2)}\kappa_{r+2}^{j+l(l+1)}\lb1+|x|^{3(j+l)}\rb,
\end{align*}
where $\sum^\ast$ denotes the summation over $j=1\cdots,k$ and all multi-indices $\rho\in\NN^k$ s.t. $|\rho|_\ast=k+1$, and $\sum^\dag$ denotes the summation over $s=1,\cdots,l$ and all $\rho\in\NN^l$ s.t. $|\rho|_\ast=l$. Then $\lv\sum_{j+l=k+1}w_{j,l}(x)\rv\lq C_{q,k}\lambda_1^{-5(k+1)(k+2)}\lambda_q^{k\lb\frac{5}{2}k+6\rb}\kappa_{r+2}^{(k+1)^2}\lb1+|x|^{3(k+1)}\rb$, which is no more than \eqref{induction}, and hence by \eqref{formal} $\tilde{Q}_{k+1}$ has the same bound as \eqref{induction}.

For each $\alpha\in\NN^q$, it is known that the Hermite-type polynomial
\begin{equation*}
H_{\alpha,\Sigma}(x)=\frac{1}{\sqrt{\alpha!}}\prod_{j=1}^qH_{\alpha_j}(\lambda_j^{-1/2}x_j)
\end{equation*}
is the eigenfunction of the differential operator of the equation \eqref{PDE} corresponding to the eigenvalue $\nu_\alpha:=\sum_{j=1}^q\alpha_j\lambda_j^{-1}\lq|\alpha|/\lambda_q$. Since $\{H_{\alpha,\Sigma}\}$ form an orthonormal basis for the Hilbert space $L^2(\RR^q,\phi_\Sigma\td\Lambda)$, the polynomial $u_{k+1}$ can be expressed as
\begin{equation*}
u_{k+1}(x)=\sum_{|\alpha|\lq3(k+1)}c_\alpha\nu_\alpha^{-1}H_{\alpha,\Sigma}(x),
\end{equation*}
where $c_\alpha=\int_{\RR^q}(Q_{k+1}(z)-\tilde{Q}_{k+1}(z))H_{\alpha,\Sigma}(z)\phi_\Sigma(z)\td z$. Then by the Cauchy-Schwartz inequality and \eqref{bound_2}, the above estimate for $\tilde{Q}_{k+1}$ implies that
\begin{align*}
|u_{k+1}(x)|\lq&C_{q,k}\sum_{|\alpha|\lq3(k+1)}C_\alpha\lb\int_{\RR^q}|Q_{k+1}(z)-\tilde{Q}_{k+1}(z)|^2\phi_\Sigma(z)\td z\rb^\frac{1}{2}\lambda_q\lambda_1^{-\frac{|\alpha|}{2}}\lb1+|x|^{|\alpha|}\rb\\
\lq&C_{q,k}\lambda_1^{-(k+1)\lb5k+11\rb-\frac{3}{2}(k+1)}\lambda_q^{(k+1)\lb\frac{5}{2}k+6\rb+\frac{3}{2}(k+1)+1}\kappa_{r+2}^{(k+1)^2}\lb1+|x|^{3(k+1)}\rb\\
\lq&C_{q,k}\lambda_1^{-5(k+1)(k+3)}\lambda_q^{1+\frac{5}{2}(k+1)(k+3)}\kappa_{r+2}^{(k+1)^2}\lb1+|x|^{3(k+1)}\rb,
\end{align*}
which agrees with the induction hypothesis; the initial step for $u_1$ also holds true as $\tilde{Q}_1\equiv0$. Therefore the bound in (i) holds true for each $u_k$, and it holds true for its gradient $p_k$, too. The induction step also gives the bound \eqref{induction} for $\tilde{Q}_{r+1}$, and hence (ii) follows from the triangle inequality and \eqref{bound_2} again.

For a general positive-definite $\Sigma$, one diagonalises it with an orthogonal matrix $A$ and applies the same arguments above to the scaled polynomials $p_k^\ast(x):=A^\top p_k(Ax)$. By Remark \ref{rotation} the $p_k^\ast$'s still lie in $\mathscr{P}_G$, and the results still hold.
\end{proof}

The proof above takes a compromise approach by introducing $\tilde{Q}_{r+1}$ in \eqref{density_approx}: the condition ``$\eps$ sufficiently small" is not needed for Lemma \ref{bijection}, as Proposition 1 in \cite{davie2015Polpernordis} allows an $O(\eps^{r+1})$ estimate for $\int_{\RR^q}|x|^M|\zeta_{\eps,r}(x)-\phi_\Sigma(x)\Q_{\eps,r}(x)|\td x$ for all $\eps>0$. However, whilst practically $K_r=\lambda_1^{-5r(r+2)}\lambda_q^{1+5r(r+2)/2}\kappa_{r+2}^{r^2}$ by (i), it is rather complicated to compute $N_r$ explicitly.

Before proceeding to the main result, given fixed parameters $\beta\in(0,1/3)$ and $\bar{\gamma},\tau\in(0,1)$, it would be convenient to combine all the criteria for $\eps$ together: for any integer $r\gq3$ the statement ``\textbf{$m$ sufficiently large w.r.t. $r$}" or ``\textbf{$\eps$ sufficiently small w.r.t. $r$}" refers to that $m>\kappa_{r+\tau}^{\max\{4,6/(r(1-3\beta))\}}\vee K_{r-3}^{2N_{r-3}}$ with $K_0,N_0:=1$ and that \eqref{large_m} holds for $n=r$.

\subsection{Main Result and Some Special Cases}

Given Lemma \ref{bijection}, it will be shown in the following theorem that the normal distribution $\N(0,\Sigma)$ perturbed by the polynomials $\{p_k\}$ is close to the law $\prb_m$ in the Vaserstein distances. The proof is a more detailed and quantitative version of what is exhibited in Section 4 in \cite{davie2015Polpernordis}, and specifies the dependence on $\Sigma$ and certain moments of $X$.

\begin{theorem}\label{CLT}
Suppose $\Sigma$ is non-singular and $\chi$ satisfies \ref{cramer_interp}. Fix an integer $n\gq3$, an even integer $p\in2\ZZ^+$ and $\beta\in(0,1/3)$. If $\ex|X|^{p(n-2)+2+\tau}<\infty$ for some $\tau\in(0,1)$, then for $m$ sufficiently large w.r.t. $p(n-2)+2$,
\begin{equation*}
\wass_p(Y_m,~\mathbf{p}_{m,n-3}(\xi_\Sigma))\lq C_{p,q,n,\tau}\Xi_Xm^{-(n-2)/2},
\end{equation*}
where $\mathbf{p}_{m,n-3}$ is the polynomial defined by Lemma \ref{bijection}, and $\Xi_X$ is a constant depending on $p,n,\beta,\eta,\Sigma,~\ex|X|^{p(n-2)+1}$ and $\ex|X|^{p(n-2)+2+\tau}$.
\end{theorem}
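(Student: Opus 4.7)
The plan is to invoke the crude bound \eqref{crude_bound} between the density of the mollified variable $\tilde Y_m:=Y_m+\eta$ (with $\eta$ distributed as $\theta_\eps$ from \eqref{theta_def}) and the density $\zeta_{\eps,r^*}$ of a higher-order perturbation $\mathbf{p}_{m,r^*}(\xi_\Sigma)$, where $r^*:=p(n-2)$, and then to ``telescope back'' from $r^*$ to $n-3$. The auxiliary index $r^*$ must be chosen strictly larger than $n-3$ because the exponent $1/p$ in \eqref{crude_bound} forces the weighted $L^1$-density error to be of order $\eps^{p(n-2)}$ in order to produce the Wasserstein rate $\eps^{n-2}$; the choice $r^*=p(n-2)$ is convenient precisely because the moment requirement $\kappa_{r^*+2+\tau}$ in Proposition \ref{prop} then coincides with $\ex|X|^{p(n-2)+2+\tau}$ in the hypothesis. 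By triangle inequality,
$$
\wass_p(Y_m,\mathbf{p}_{m,n-3}(\xi_\Sigma))\lq\wass_p(Y_m,\tilde Y_m)+\wass_p(\tilde Y_m,\mathbf{p}_{m,r^*}(\xi_\Sigma))+\wass_p(\mathbf{p}_{m,r^*}(\xi_\Sigma),\mathbf{p}_{m,n-3}(\xi_\Sigma)).
$$

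The first term is at most $\eps^{n+1}$ since $|\tilde Y_m-Y_m|<\eps^{n+1}$ almost surely by construction of $\theta_\eps$. For the telescoping third term, couple the two random variables through the same $\xi_\Sigma$: their difference is $\sum_{k=n-2}^{r^*}\eps^k p_k(\xi_\Sigma)$, and using Lemma \ref{bijection}(i) together with standard Gaussian moment estimates ($\|1+|\xi_\Sigma|^{3k}\|_p\lq C_{q,k,p}\lambda_q^{3k/2}$) gives $\|\mathbf{p}_{m,r^*}(\xi_\Sigma)-\mathbf{p}_{m,n-3}(\xi_\Sigma)\|_p=O(\eps^{n-2})$, the leading constant being absorbed into $\Xi_X$.

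For the main term, \eqref{crude_bound} provides
$$
\wass_p(\tilde Y_m,\mathbf{p}_{m,r^*}(\xi_\Sigma))^p\lq C_p\int_{\RR^q}|x|^p\lv\tilde f_m(x)-\zeta_{\eps,r^*}(x)\rv\td x,
$$
and decomposing $\zeta_{\eps,r^*}=\phi_\Sigma\Q_{\eps,r^*}+R_{\eps,r^*}$ reduces the right-hand side, via Lemma \ref{bijection}(ii) with $M=p$, to controlling $\int_{\RR^q}|x|^p|g(x)|\td x$ for $g:=\tilde f_m-\phi_\Sigma\Q_{\eps,r^*}$. The Fourier transform of $g$ is $\hat g=\tilde\psi_m-e^{-z\cdot\Sigma z/2}\P_{\eps,r^*}$, for which Proposition \ref{prop} applied with its internal index $n_{\mathrm{Prop}}=r^*+2$ yields $\|\hat g\|_1\lq C\eps^{r^*+1}$. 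To promote this Fourier-side $L^1$ control into the desired weighted $L^1$ bound on $g$, I dominate
$$
\int_{\RR^q}|x|^p|g(x)|\td x\lq C_{p,q}\sup_{x\in\RR^q}(1+|x|^2)^{(p+q+2)/2}|g(x)|\lq C_{p,q}\sum_{|\alpha|\lq p+q+2}\|x^\alpha g\|_\infty,
$$
and appeal to the Fourier inversion identity $\|x^\alpha g\|_\infty\lq C_q\|\partial^\alpha\hat g\|_1$.

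The main obstacle is therefore the derivative estimate $\|\partial^\alpha\hat g\|_1\lq C\eps^{r^*+1}$ for every $|\alpha|\lq p+q+2$, i.e. a strengthened version of Proposition \ref{prop} tracking derivatives of the characteristic-function difference. The strategy is to revisit the proofs of Lemma \ref{char_estimate_1} and Proposition \ref{prop}: on the bulk region $|z|\lq m^{\beta/2}$, each derivative in $z$ introduces extra polynomial factors in $|z|$ but these are absorbed by the Gaussian decay $e^{-z\cdot\Sigma z/4}$ already present in \eqref{annulus}; on the Cram\'er tail $|z|>m^{1/2}\delta$ one simply raises the exponent $K$ in \eqref{K} from $q+1$ to some $K>q+p+2$, so that $\bar\gamma^m\eps^{-K(n+1)}$ still vanishes for $m$ large. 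Careful bookkeeping of constants in $\Sigma$, $\kappa_{p(n-2)+1}$ and $\kappa_{p(n-2)+2+\tau}$ then produces the constant $\Xi_X$. Assembling everything yields $\wass_p(\tilde Y_m,\mathbf{p}_{m,r^*}(\xi_\Sigma))\lq C\eps^{(r^*+1)/p}=C\eps^{n-2+1/p}$, and combined with the $O(\eps^{n-2})$ telescoping cost and the negligible $O(\eps^{n+1})$ mollification cost, the advertised Wasserstein rate follows.
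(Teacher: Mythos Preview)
Your overall architecture --- mollify, use \eqref{crude_bound} at a higher Edgeworth level $r^*=p(n-2)$, then telescope back to $n-3$ --- matches the paper's. The divergence, and the gap, is in how you pass from the Fourier-side bound $\|\hat g\|_1=O(\eps^{r^*+1})$ of Proposition~\ref{prop} to the weighted physical-space bound $\int|x|^p|g(x)|\,\td x=O(\eps^{r^*+1})$.

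Your route is to dominate $\int|x|^p|g|$ by $\sup_x(1+|x|^2)^{(p+q+2)/2}|g(x)|$ and then bound this via $\|\partial^\alpha\hat g\|_1$ for $|\alpha|\lq p+q+2$. But $\partial^\alpha\tilde\psi_m$ involves $\partial^\alpha\chi$, and for that to exist (let alone to be controlled uniformly in the ``revisited'' Lemma~\ref{char_estimate_1}) you need $\ex|X|^{p+q+2}<\infty$. For $n=3$ the hypothesis supplies only $\ex|X|^{p+2+\tau}$, so as soon as $q\gq2$ your derivative count exceeds the available moments. This is exactly the multidimensional regime the theorem is written for, and it is the content of the Remark immediately following the theorem that the moment requirement be \emph{independent of $q$}. (There is a secondary worry: even granting the extra moments, differentiating the Taylor-remainder estimate in Lemma~\ref{char_estimate_1} costs one order of smoothness per derivative, so keeping $\|\partial^\alpha\hat g\|_1$ at $O(\eps^{r^*+1})$ is not automatic and would again force higher moments.)

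The paper avoids Fourier differentiation entirely. It splits $\int|x|^p|g|$ at radius $\eps^{-\eta/(p+q)}$: on the inner ball the uniform bound \eqref{3} suffices; on the outer region one uses $\tilde f_m\gq0$ to write $|g|\lq g+2\phi_\Sigma|\Q_{\eps,r-2}|$, the Gaussian piece being harmless. The crucial observation is that for \emph{even} $p$ the function $|x|^p$ is a polynomial of degree $p<r$, and by Remark~\ref{moment_match} the Edgeworth density $\phi_\Sigma\Q_{\eps,r-2}$ reproduces the moments of $Y_m$ up to order $r$. Hence the global signed integral $\int_{\RR^q}|x|^p g\,\td x$ collapses to the mollification error $\int|x|^p(\tilde f_m-f_m)\,\td x=O(\eps^{r+1})$. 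This moment-matching trick is what makes the hypothesis dimension-free, and it is the missing idea in your argument.
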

\begin{proof}
Denote $r=p(n-2)+2$. Taking the inverse Fourier transform, Proposition \ref{prop} implies that for all $x\in\RR^q$ and for $m$ sufficiently large w.r.t. $r$,
\begin{align}\label{3}
|F_{r-2}(\eps,x)|:=&\lv\tilde{f}_m(x)-\phi_\Sigma(x)\Q_{\eps,r-2}(x)\rv\lq C_q\int_{\RR^q}\lv\tilde{\psi}_m(z)-e^{-\frac{1}{2}z\cdot\Sigma z}\P_{\eps,r-2}(z)\rv\td z\nonumber\\
\lq&C_{q,r,\tau}(\det\Sigma)^{-\frac{1}{2}}\lambda_1^{-\frac{r-1}{2\beta}}\kappa_{r+\tau}^{r-2}\eps^{r-1}.
\end{align}
The goal is to use the inequality \eqref{crude_bound} to bound the $\wass_p$ distance, for which one first writes
\begin{equation*}
\int_{\RR^q}|x|^p|F_{r-2}(\eps,x)|\td x\lq\int_{\RR^q}|x|^p\lb\tilde{f}_m(x)+\phi_\Sigma(x)\lv\Q_{\eps,r-2}(x)\rv\rb\td x\lq I_1+2I_2+I_3,
\end{equation*}
where, for any $\eta\in(0,1)$,
\begin{align*}
I_1:=&\int_{|x|\lq\eps^{-\eta/(p+q)}}|x|^p|F_{r-2}(\eps,x)|\td x,~I_2:=\int_{|x|>\eps^{-\eta/(p+q)}}|x|^p\phi_\Sigma(x)\lv\Q_{\eps,r-2}(x)\rv\td x,\\
I_3:=&\int_{|x|>\eps^{-\eta/(p+q)}}|x|^pF_{r-2}(\eps,x)\td x.
\end{align*}
For any fixed $p\gq2$ and $\eta\in(0,1)$, one finds, by virtue of \eqref{3},
\begin{equation*}
I_1\lq C_{q,r,\tau}(\det\Sigma)^{-\frac{1}{2}}\lambda_1^{-\frac{r-1}{2\beta}}\kappa_{r+\tau}^{r-2}\eps^{r-1-\eta}.
\end{equation*}
For the integral on the complement $\{x:~|x|>\eps^{-\eta/(p+q)}\}=\{1<\eps|x|^{(p+q)/\eta}\}$, one gets
\begin{align}
I_2\lq&\int_{|x|>\eps^{-\eta/(p+q)}}|x|^p\phi_\Sigma(x)\kappa_r^{r-2}\lb1+\sum_{k=1}^{r-2}\eps^k|x|^{3k}\rb\td x\nonumber\\
\lq&C_r\int_{|x|>\eps^{-\eta/(p+q)}}|x|^p\phi_\Sigma(x)\kappa_r^{r-2}\eps^{r-1}|x|^{\frac{p+q}{\eta}(r-1)}\td x\lq C_r\lambda_q^{\frac{p}{2}+\frac{p+q}{2\eta}(r-1)}\kappa_r^{r-2}\eps^{r-1},
\end{align}
due to the fact that $(p+q)/\eta>3$ and \eqref{bound_2}. Also observe that
\begin{equation*}
I_3\lq\int_{\RR^q}|x|^p\lb\tilde{f}_m(x)-\phi_\Sigma(x)\Q_{\eps,r-2}(x)\rb\td x+I_1=:I_4+I_1,
\end{equation*}
by the triangle inequality. In order to get a good estimate for $I_4$, first observe that $\forall p\gq2$ by Rosenthal's inequality - see e.g. Lemma 1 in \cite{RatAppMulInvPriSumiidRanVecFinMom},
\begin{equation}\label{rosenthal}
\int_{\RR^q}|x|^p\prb_m(\td x)=\ex|Y_m|^p=m^{-\frac{p}{2}}\ex\lv\sum_{j=1}^mX_j\rv^p\lq C_p\lb m^{1-\frac{p}{2}}\ex|X|^p+(\ex|X|^2)^\frac{p}{2}\rb.
\end{equation}
Also, from the construction of $\theta_\eps$ \eqref{theta_def} (now supported on $\{|x|<\eps^{r+1}\}$),
\begin{equation}\label{theta}
\int_{\RR^q}|x|^p\theta_\eps(\td x)=\int_{|y|<1}|y|^p\eps^{p(r+1)}h(y)\td y<\eps^{p(r+1)},
\end{equation}
by a change of variables. For an even $p\gq4$, as $p<r$ observe that all the moments up to $p$ of the expansion $\phi_\Sigma\Q_{\eps,r-2}$ match those of $Y_m$ by Remark \ref{moment_match}. Hence by \eqref{rosenthal} and \eqref{theta},
\begin{align*}
I_4\lq&\int_{\RR^q}\int_{\RR^q}\lb|x+y|^p-|x|^p\rb\prb_m(\td x)\theta_\eps(\td y)\lq C_{p,q}\sum_{k=1}^p\int_{\RR^q}\int_{\RR^q}|x|^{p-k}|y|^k\prb_m(\td x)\theta_\eps(\td y)\\
\lq&C_{p,q}\eps^{r+1}\int_{\RR^q}|x|^{p-1}\prb_m(\td x)\lq C_{p,q}\eps^{r+1}\lb\eps^{p-3}\ex|X|^{p-1}+\lambda_q^\frac{p-1}{2}\rb;
\end{align*}
for $p=2$ the bound is reduced to $C_{p,q}\eps^{r+1}(\ex|X|^2)^{1/2}$ by \eqref{rosenthal} and H\"older's inequality. Therefore, altogether one arrives at, for $p\lq r$,
\begin{equation*}
\int_{\RR^q}|x|^p|F_{r-2}(\eps,x)|\td x\lq C_{p,q,r,\tau}\lb(\det\Sigma)^{-\frac{1}{2}}\lambda_1^{-\frac{r-1}{2\beta}}+\lambda_q^{\frac{p}{2}+\frac{p+q}{2\eta}(r-1)}\rb\kappa_{r+\tau}^{r-2}\eps^{r-1-\eta}.
\end{equation*}
Finally by the triangle inequality one removes the $(r-2)$-th term in $\Q_{\eps,r-2}$:
\begin{equation*}
\int_{\RR^q}|x|^p|F_{r-2}(\eps,x)|\td x\gq\int_{\RR^q}|x|^p|F_{r-3}(\eps,x)|\td x-C_{q,r}\eps^{r-2}\kappa_r^{r-2}\int_{\RR^q}|x|^p\lb|x|^r+|x|^{3(r-2)}\rb\phi_\Sigma\td x,
\end{equation*}
and uses \eqref{bound_2} again to deduce the following estimate for the Edgeworth approximation:
\begin{equation*}
\int_{\RR^q}|x|^p|F_{r-3}(\eps,x)|\td x\lq C_{p,q,r}\lb(\det\Sigma)^{-\frac{1}{2}}\lambda_1^{-\frac{r-1}{2\beta}}+\lambda_q^{\frac{p}{2}+\frac{p+q}{2\eta}(r-1)}\rb\kappa_{r+\tau}^{r-2}\eps^{r-2}.
\end{equation*}

Since the smooth measure $\theta_\eps$ is also supported on $\{x:|x|<\eps^{r-2}\}$, the estimate above implies that the Edgeworth polynomials $\{Q_k\}\in\mathscr{P}_\Sigma$ form an $\mathcal{A}_\Sigma$-sequence for the family of probability measures $\{\prb_m\}$ - see Definition 1 in \cite{davie2015Polpernordis}. Then one can extend the expansion $\Q_{\eps,r-3}$ to a larger value of $r$ and take the $p$-th root to get a $\wass_p$ estimate, as in done in the proof of Theorem 4 in \cite{davie2015Polpernordis}.

With the polynomials $\{p_k\}=\mathfrak{S}_\Sigma^{-1}(\{Q_k\})$ and $\mathbf{p}_{\eps,r-3},~R_{\eps,r-3}$ defined as in Lemma \ref{bijection}, using the triangle inequality and the inequality \eqref{crude_bound}, one can deduce the following estimate for an integer $n\gq3$ by replacing $r=p(n-2)+2$ in the estimate:
\begin{align*}
%&\wass_p\lb Y_m,~\xi_\Sigma+\sum_{k=1}^{p(n-2)-1}\eps^kp_k(\xi_\Sigma)\rb\\
&\wass_p\lb Y_m,~\mathbf{p}_{\eps,p(n-2)-1}(\xi_\Sigma)\rb\lq C_p\lb\int_{\RR^q}|x|^p|F_{p(n-2)-1}|\td x+\int_{\RR^q}|x|^p\lv R_{\eps,p(n-2)-1}(x)\rv\td x\rb^\frac{1}{p}\\
%&\lq C_{p,q,n}\left\{|\Sigma|^{-\frac{1}{2}}\lambda_1^{-\frac{3}{2}(p(n-2)+1)}\Gamma_{p(n-2)+2,\beta}+\lambda_q^\frac{p}{2}+\lambda_q^{\frac{p}{2}+\frac{p+q}{2\eta}(p(n-2)+1)}+\ex|X|^{p-1}\right\}^\frac{1}{p}\eps^{n-2}\\
&\lq C_{p,q,n,\tau}\lb(\det\Sigma)^{-\frac{1}{2p}}\lambda_1^{-\frac{1}{2\beta}\lb n-2+\frac{1}{p}\rb}+\lambda_q^{\frac{1}{2}+\frac{p+q}{2\eta}\lb n-2+\frac{1}{p}\rb}\rb\kappa_{p(n-2)+2+\tau}^{n-2}\eps^{n-2}\\
&\quad+C_{p,q,n}\lambda_1^{-5(n-2)(p(n-2)+1)}\lambda_q^{\frac{1}{2}+\frac{5}{2}(n-2)(p(n-2)+1)}\kappa_{p(n-2)+1}^{p(n-2)^2}\eps^{n-2},
\end{align*}
whilst the excess terms from $n-2$ to $p(n-2)-1$ can be handled by (i) of Lemma \ref{bijection} and \eqref{bound_2} again:
\begin{align*}
%&\wass_p\lb\xi_\Sigma+\sum_{k=1}^{p(n-2)-1}\eps^kp_k(\xi_\Sigma),~\xi_\Sigma+\sum_{k=1}^{n-3}\eps^kp_k(\xi_\Sigma)\rb\\
\wass_p\lb\mathbf{p}_{\eps,p(n-2)-1}(\xi_\Sigma),~\mathbf{p}_{\eps,n-3}(\xi_\Sigma)\rb\lq&C_p\lb\ex\lv\sum_{k=n-2}^{p(n-2)-1}\eps^kp_k(\xi_\Sigma)\rv^p\rb^\frac{1}{p}\\
\lq&C_{p,q,n}\lambda_1^{-5p^2(n-2)^2+5}\lambda_q^{\frac{5}{2}p^2(n-2)^2+\frac{3}{2}p(n-2)-3}\kappa_{p(n-2)+1}^{(p(n-2)-1)^2}\eps^{n-2}.
\end{align*}
Thus the claimed result follows from the triangle inequality.
\end{proof}
\begin{remark}
The number of moments of $X$ needed for Theorem \ref{CLT} is independent of the dimension $q$.
\end{remark}

The optimal result for the central limit theorem for $q=1$ is already given in \cite{BerBouEdgExpCenLimTheTraDis}, which is not fully recovered by Theorem \ref{CLT} as the inequality \eqref{crude_bound} is rather crude compared to \eqref{wass_1d}. For $q\gq2$, Theorem \ref{CLT} immediately implies the following (by choosing $n=3$):
\begin{corollary}\label{n=3}
Suppose the i.i.d. random variables $\{X_j\}$ have non-singular covariance $\Sigma$ and satisfy \ref{cramer_interp}, and let $p\in2\ZZ^+$. If $\ex|X|^{p+2+\tau}<\infty$ for some $\tau\in(0,1)$, then by taking e.g. $\beta=1/6,~\eta=1/2$, the following holds for $m$ sufficiently large w.r.t. $p+2$:
\begin{align*}
\wass_p(Y_m,~\xi_\Sigma)\lq&C_{p,q,\tau}\lb(\det\Sigma)^{-1/(2p)}\lambda_1^{-3(1+1/p)}+\lambda_q^{(p+q)(1+1/p)+1/2}\rb\kappa_{p+2+\tau}m^{-1/2}\\
&+C_{p,q}\lambda_1^{-5p^2+5}\lambda_q^{(5p^2+3p)/2-3}\kappa_{p+1}^{2\vee(p-1)^2}m^{-1/2}.
\end{align*}
\end{corollary}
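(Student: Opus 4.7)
The plan is to specialise Theorem \ref{CLT} to $n = 3$ and to substitute the concrete parameter choices $\beta = 1/6$ and $\eta = 1/2$ into the explicit form of $\Xi_X$ produced in its proof. The first thing to notice is that for $r = 0$ the definition $\mathbf{p}_{\eps,r}(\xi_\Sigma) = \xi_\Sigma + \sum_{k=1}^r \eps^k p_k(\xi_\Sigma)$ has an empty sum, so $\mathbf{p}_{m,n-3}(\xi_\Sigma) = \mathbf{p}_{m,0}(\xi_\Sigma) = \xi_\Sigma$; no polynomial perturbation survives, and the coupling provided by Theorem \ref{CLT} becomes one between $Y_m$ and the bare Gaussian $\xi_\Sigma$. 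Since $n - 2 = 1$, the moment condition $\ex|X|^{p(n-2)+2+\tau} < \infty$ reduces to $\ex|X|^{p+2+\tau} < \infty$, the largeness condition on $m$ becomes the one implicit in the hypothesis, and the rate $\eps^{n-2} = m^{-1/2}$ appears as claimed.

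The remaining task is purely bookkeeping: one reads off $\Xi_X$ from the two successive estimates at the end of the proof of Theorem \ref{CLT}. The first estimate,
\[
\wass_p\lb Y_m,\,\mathbf{p}_{\eps,p(n-2)-1}(\xi_\Sigma)\rb \lq C_{p,q,n,\tau}\lb(\det\Sigma)^{-\frac{1}{2p}}\lambda_1^{-\frac{1}{2\beta}(n-2+\frac{1}{p})} + \lambda_q^{\frac{1}{2}+\frac{p+q}{2\eta}(n-2+\frac{1}{p})}\rb\kappa_{p(n-2)+2+\tau}^{n-2}\eps^{n-2},
\]
specialises with $n=3,\,\beta=1/6,\,\eta=1/2$ to the exponents $-3(1+1/p)$ on $\lambda_1$, $(p+q)(1+1/p) + 1/2$ on $\lambda_q$, and $\kappa_{p+2+\tau}$ to the first power, which is the first summand announced in the corollary.

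The second summand comes from the triangle inequality $\wass_p(\mathbf{p}_{\eps,p-1}(\xi_\Sigma),\,\xi_\Sigma)$, estimated via Lemma \ref{bijection}(i) and the Gaussian moment bound \eqref{bound_2}: this produces the factors $\lambda_1^{-5p^2+5}$ and $\lambda_q^{(5p^2+3p)/2 - 3}$ and the moment exponent $(p-1)^2$ on $\kappa_{p+1}$, arising from the square of the cumulative upper bound on $\sum_{k=1}^{p-1}\eps^k p_k(\xi_\Sigma)$. The only mildly delicate point is the exponent on $\kappa_{p+1}$: the proof of Theorem \ref{CLT} actually yields two competing exponents, namely $p(n-2)^2 = p$ from the extension of the Edgeworth approximation to $r = p(n-2)+2 = p+2$, and $(p(n-2)-1)^2 = (p-1)^2$ from the excess perturbations. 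Since $(p-1)^2 \gq p$ iff $p \gq 3$ while at $p = 2$ one has $p = 2 > 1 = (p-1)^2$, the correct exponent is $2 \vee (p-1)^2$, as stated.

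Assembling the two summands by the triangle inequality
\[
\wass_p(Y_m,\,\xi_\Sigma) \lq \wass_p(Y_m,\,\mathbf{p}_{\eps,p-1}(\xi_\Sigma)) + \wass_p(\mathbf{p}_{\eps,p-1}(\xi_\Sigma),\,\xi_\Sigma)
\]
yields the claimed bound. I expect no real obstacle here beyond careful arithmetic when substituting $n=3$; the main step of interest --- the moment-exponent optimisation $2 \vee (p-1)^2$ --- is the single non-mechanical aspect of the argument.
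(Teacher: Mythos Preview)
Your proposal is correct and follows exactly the approach the paper intends: the corollary is stated there as an immediate consequence of Theorem \ref{CLT} ``by choosing $n=3$'', with no further proof given. Your bookkeeping of the exponents under $\beta=1/6$, $\eta=1/2$ is accurate, and in particular your identification of the exponent $2\vee(p-1)^2$ on $\kappa_{p+1}$ as the maximum of the two competing exponents $p$ (from the $R_{\eps,p-1}$ term in Lemma \ref{bijection}(ii)) and $(p-1)^2$ (from the excess perturbations) is the only point requiring any thought, and you handle it correctly.
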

As mentioned in the introduction, for the special case $p=2$ this corollary is weaker than the results of Bobkov \cite{bobkov2013RatConEdgExpEntCenLimThe} and Bonis \cite{bonis2016RatCenLimTheDifAppviaSteMet}. Although the condition $\ex|X|^{4+\tau}<\infty$ is slightly better than that of Bobkov, he does not require Cram\'er's condition as he aimed at estimating the relative entropy $\mathds{H}(\prb_m\|\N(0,\Sigma))$, which in turn gives a bound for the $\wass_2$ distance according to Talagrand's transport inequality \cite{talagrand1996TraCosGauOthProMea}. On the other hand, Bonis' optimal result relies on a differential estimate for the $\wass_2$ distance in terms of the Fisher information. As the $\wass_2$ distance enjoys those special properties, the inequality \eqref{crude_bound} does not provide a sharp bound.

However, Theorem \ref{CLT} can potentially give higher-order convergence if one considers a non-trivial expansion ($n>3$). For example, when choosing $n=4$, one gets a rate $O(m^{-1})$ under Cram\'er's condition and that $\ex|X|^{6+\tau}<\infty$. The task is to find the polynomial $p_1$ using the method described in the discussion before Lemma \ref{bijection}: given $Q_1(x)$ defined in \eqref{hermite}, one looks for the unique (up to an additive constant) polynomial solution $u_1:\RR^q\to\RR$ satisfying \eqref{PDE} for the initial step:
\begin{equation*}
-\Delta u_1(x)+x\cdot\Sigma^{-1}\nabla u_1(x)=Q_1(x).
\end{equation*}
To illustrate that consider the simplest case where $q=2$ and $\Sigma=I$. The cubic polynomial $6iP_1(z)=\mu_{(3,0)}z_1^3+3\mu_{(2,1)}z_1^2z_2+3\mu_{(1,2)}z_1z_2^2+\mu_{(0,3)}z_2^3$ gives
\begin{equation*}
6Q_1(x)=\mu_{(3,0)}H_3(x_1)+3\mu_{(2,1)}H_2(x_1)H_1(x_2)+3\mu_{(1,2)}H_1(x_1)H_2(x_2)+\mu_{(0,3)}H_3(x_2).
\end{equation*}
Notice that $x\cdot\nabla u(x)=ku(x)$ for any monomial $u$ of degree $k$, and so the polynomial solution to the PDE above is cubic with no quadratic terms. Then using the property $H_j'=jH_{j-1}$ and matching the coefficients on both sides of the equation, one gets
\begin{align*}
u_1(x)=&\frac{1}{18}\mu_{(3,0)}H_3(x_1)+\frac{1}{6}\mu_{(2,1)}H_2(x_1)H_1(x_2)+\frac{1}{6}\mu_{(1,2)}H_1(x_1)H_2(x_2)+\frac{1}{18}\mu_{(0,3)}H_3(x_2)\\
&+\frac{1}{3}(\mu_{(3,0)}+\mu_{(1,2)})H_1(x_1)+\frac{1}{3}(\mu_{(0,3)}+\mu_{(2,1)})H_1(x_2)+C,
\end{align*}
and hence the perturbing polynomial $p_1=\nabla u_1$ is found.

Under certain stronger conditions, one can also obtain higher-order convergence without specifying the perturbing polynomials $p_k$. For $q=1$, Bobkov \cite{BerBouEdgExpCenLimTheTraDis} (Theorem 1.3) proved that if $\ex X^k=\ex\xi_\Sigma^k$ for $k=1,\cdots,n-1,~3\lq n\in\ZZ^+$, and $\ex|X|^{p(n-2)+2}<\infty$, then under Cram\'er's condition one has $\wass_p(Y_m,\xi_\Sigma)=O(m^{-(n-2)/2})$. This can be readily generalised to $q\gq2$ by Theorem \ref{CLT}: if the first $n-1$ moments match those of $\N(0,\Sigma)$, the cumulants $\mu_\alpha(X)=\mu_\alpha(\xi_\Sigma)=0$ for all $|\alpha|=3,\cdots,n-1$, implying that $P_k=Q_k\equiv0$. This immediately implies that $\mathcal{L}_\Sigma p_k\equiv0$ in \eqref{PDE} for all $k=1,\cdots,n-3$, forcing $p_k\equiv0$. Therefore one asserts the following:
\begin{corollary}
Suppose the i.i.d. random variables $\{X_j\}$ with non-singular covariance $\Sigma$ satisfy Cram\'er's condition and let $p\in2\ZZ^+$. If $\exists3\lq n\in\ZZ^+$ s.t. $\ex X^\alpha=\ex\xi_\Sigma^\alpha$ for all $|\alpha|=1,\cdots,n-1$, and $\ex|X|^{p(n-2)+2+\tau}<\infty$ for some $\tau\in(0,1)$, then $\wass_p(Y_m,\xi_\Sigma)=O(m^{-(n-2)/2})$ for $m$ sufficiently large w.r.t. $p(n-2)+2$.
\end{corollary}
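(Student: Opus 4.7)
The plan is to apply Theorem \ref{CLT} with the given $n$ and to argue that, under the moment-matching hypothesis, the perturbing polynomials $p_1,\ldots,p_{n-3}$ from Lemma \ref{bijection} all vanish identically, so that $\mathbf{p}_{m,n-3}(\xi_\Sigma)=\xi_\Sigma$ and the asserted rate follows directly from the theorem.

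First I would invoke the standard polynomial identities expressing every cumulant $\mu_\alpha$ as a universal polynomial in the moments $\{\ex X^\beta:|\beta|\lq|\alpha|\}$ (see Section~6 of \cite{bhattacharya1976NorAppAsyExp}). Since $\ex X^\alpha=\ex\xi_\Sigma^\alpha$ for all $|\alpha|\lq n-1$, this yields $\mu_\alpha(X)=\mu_\alpha(\xi_\Sigma)$ in the same range, and as the Gaussian $\xi_\Sigma$ has all cumulants of order $\gq 3$ equal to zero, one obtains $\mu_\alpha(X)=0$ for $3\lq|\alpha|\lq n-1$. Turning to $P_k$ in \eqref{psi_expand}, the derivation in Lemma \ref{char_estimate_1} shows that each of its monomials is (up to a numerical factor and a power of $i$) a product $\mu_{\alpha_1}\cdots\mu_{\alpha_l}$ attached to $z^{\alpha_1+\cdots+\alpha_l}$ with $\sum_{j}(|\alpha_j|-2)=k$ and each $|\alpha_j|\gq3$. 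For $k\lq n-3$ every factor then satisfies $|\alpha_j|\lq k+2\lq n-1$, so each cumulant in the product vanishes; hence $P_1\equiv\cdots\equiv P_{n-3}\equiv0$, and from the definition \eqref{edgeworth} also $Q_1\equiv\cdots\equiv Q_{n-3}\equiv0$.

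Next I would prove by induction on $k$ that $p_k\equiv0$ for $k=1,\ldots,n-3$. The base case uses $\tilde{Q}_1\equiv0$ (the initialisation) together with $Q_1\equiv0$, so \eqref{PDE} reduces to $\mathscr{L}_\Sigma p_1=0$, and the bijectivity of $\mathscr{L}_\Sigma:\mathscr{P}_G\to\mathscr{P}_\Sigma$ (Lemma~2 in \cite{davie2015Polpernordis}) forces $p_1\equiv0$. For the inductive step, assume $p_1\equiv\cdots\equiv p_k\equiv0$, so that the primitives $u_1,\ldots,u_k$ are constant. Then in the formal construction \eqref{formal} the exponent $\sum_j\eps^j x\cdot\Sigma^{-1}\nabla u_j+\cdots$ is identically zero and the denominator $1+\eps v_1+\cdots$ reduces to $1$, so every $T_j\equiv0$; similarly each $w_{j,l}$ appearing in the Taylor expansion of $S_j$ about $x$ vanishes because the shift $y-x=\sum_j\eps^j\nabla u_j(x)\equiv0$. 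Hence $\tilde{Q}_{k+1}\equiv0$, and combined with $Q_{k+1}\equiv0$ one obtains $\mathscr{L}_\Sigma p_{k+1}=0$, which again yields $p_{k+1}\equiv0$.

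Consequently $\mathbf{p}_{m,n-3}(\xi_\Sigma)=\xi_\Sigma$, and Theorem \ref{CLT} gives $\wass_p(Y_m,\xi_\Sigma)=O(m^{-(n-2)/2})$ under the same regime of $m$. No serious obstacle appears in this argument: the only piece requiring care is the inductive vanishing of $\tilde{Q}_{k+1}$, but once \eqref{formal} is read carefully it becomes apparent that every term on its right-hand side carries at least one derivative of some $u_j$, which is constant under the induction hypothesis.
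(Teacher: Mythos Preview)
Your proposal is correct and follows essentially the same route as the paper: deduce $\mu_\alpha(X)=0$ for $3\lq|\alpha|\lq n-1$ from the moment-matching hypothesis, conclude $P_k=Q_k\equiv0$ for $k\lq n-3$, then obtain $p_k\equiv0$ and apply Theorem~\ref{CLT}. Your inductive verification that $\tilde{Q}_{k+1}\equiv0$ is a careful expansion of what the paper compresses into ``this immediately implies that $\mathscr{L}_\Sigma p_k\equiv0$'', so the two arguments are the same in substance.
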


\section{Application to the Euler Approximation of {\levy} SDEs}

Consider the $d$-dimensional SDE \eqref{sde} driven by a $q$-dimensional {\levy} process \eqref{noise}. Assume that the {\levy} measure $\nu$ has finite second moment, and the function $\sigma:\RR^d\to\RR^{d\times q}$ is bounded and Lipschitz. It will be shown in this section that the $q$-dimensional small jumps \eqref{small_jumps} can also be approximated by a normal random variable with rate $1$ while the computational cost $E_\nu(h)$ remains controlled for $\nu$ satisfying certain stable-like conditions, in particular \ref{alpha-stable}.

\subsection{Normal Approximation of Small Jumps via Central Limit Bound}

The way both Fournier \cite{fournier2011SimappLevstodifequ} and Godinho \cite{AsyGraColSpaHomBolEquSofCouPot} applied the central limit theorem for the small jumps $Z_t^\epsilon$ is to split the time interval into $m$ subdivisions and view $Z_t^\epsilon$ as the sum of the i.i.d. random variables $\int_{(j-1)t/m}^{jt/m}\int_{0<|z|\lq\epsilon}z\tilde{N}(\td z,\td s),~j=1,\cdots,m$. Here an alternative approach is considered: one may decompose the range of the jumps $\{0<|z|\lq\epsilon\}$ into countably many annuli and represent the small jumps as a sum:
\begin{equation}\label{levy}
Z_t^\epsilon=\sum_{r=r_0}^\infty\int_0^t\int_{\Omega_r}z\tilde{N}(\td z,\td s)=:\sum_{r=r_0}^\infty V_t^r,
\end{equation}
where $\Omega_r=\{2^{-r-1}<|z|\lq2^{-r}\}$ and $r_0=-\log_2\epsilon>0$. Assume $\nu$ to be $\sigma$-finite and denote $\nu_r:=\nu(\Omega_r)$. By the {\levy-\ito} decomposition one knows that each $V_t^r$ is a compensated compound Poisson process:
\begin{equation}\label{jumps}
V_t^r=\sum_{j=1}^{N_t^r}X_j^r-t\nu_r\ex X_j^r,
\end{equation}
where $\{X_j^r\}$ are i.i.d. random variables bounded within $\Omega_r$ and $N_t^r$ follows $\poi(t\nu_r)$.

Instead of directly working with $V_t^r$, one may first consider a general compound Poisson process $V_t$ of the form \eqref{jumps} with $N_t\sim\poi(t\mu)$ and the jumps $X_j$ on the interval $[0,1]$. Expecting $\mu$ to be large, one can write
\begin{equation*}
Y=\mu^{-\frac{1}{2}}V_1=\mu^{-\frac{1}{2}}\sum_{j=1}^{N_1}X_j-\mu^\frac{1}{2}\ex X_j,
\end{equation*}
and approximate it by Edgeworth-type polynomials using the same recipe just as before.

Let $\psi$ and $\chi$ be the characteristic functions of $Y$ and the $X_j$'s, respectively, and $\Sigma_X$ be the covariance of $X$, with eigenvalues $\lambda_{1,X}\lq\cdots\lq\lambda_{q,X}$, and similarly $\kappa_{M,X}=1\vee\ex|X|^M,~\forall M>0$. Then one has the following simple relation between the distributions of $X$ and $Y$:
\begin{equation*}
\psi(z)=\exp\left\{\mu\lb\chi\lb\mu^{-\frac{1}{2}}z\rb-1\rb-i\mu^\frac{1}{2}z\cdot\ex X\right\}.
\end{equation*}
Given this convenient expression, instead of taking the logarithm one may directly apply Taylor expansion to $\chi$ and have, instead of \eqref{psi_expand}, a formal expansion
\begin{equation*}
\psi(z)\sim e^{-\frac{1}{2}z\cdot\Sigma_Xz}\lb1+\sum_{k=1}^\infty\mu^{-\frac{k}{2}}P_k(z)\rb,
\end{equation*}
whose $(n-2)$-th truncation leads to the same bound as in Lemma \ref{char_estimate_1} with $\mu$ in place of $m$ and $\eps=\mu^{-1/2}$ for $|z|\lq\mu^{\beta/2},~\beta\in(0,1/3)$. Note that the $P_k$ here are slightly different (in fact simpler): since no logarithm is taken, the cumulants $\mu_\alpha$ are replaced with just $\ex X^\alpha$. Also for $|z|\lq\mu^{1/2}\delta=\mu^{1/2}\min\{\lambda_{1,X}/\kappa_{3,X},~\kappa_{n,X}^{-1/n}/2\}$, one still has
\begin{equation*}
|\psi(z)|=\lv\exp\left\{\mu\lb\chi\lb\mu^{-\frac{1}{2}}z\rb-1\rb-i\mu^\frac{1}{2}z\cdot\ex X\right\}\rv\lq e^{-\frac{1}{2}z\cdot\Sigma_Xz+\frac{1}{6}\mu^{-\frac{1}{2}}\ex|X|^3|z|^3}\lq e^{-\frac{1}{4}z\cdot\Sigma_Xz}.
\end{equation*}
Moreover, by imposing Cram{\'e}r's condition \ref{cramer_interp} on the distribution of $X$, one can still achieve a similar bound for $|\psi|$:
\begin{align}
|\psi(z)|=&\lv\exp\left\{\chi\lb\mu^{-\frac{1}{2}}z\rb-1-i\mu^{-\frac{1}{2}}z\cdot\ex X\right\}\rv^\mu\nonumber\\
=&\lb\exp\left\{\textrm{Re}\chi\lb\mu^{-\frac{1}{2}}z\rb-1\right\}\rb^\mu\lq\lb e^{\bar{\gamma}-1}\rb^\mu\in(0,1),\label{gamma_bar}
\end{align}
for $|z|>\mu^{1/2}\delta$ and some $\bar{\gamma}\in(0,1)$ according to Lemma \ref{cramer}. Thus one arrives at virtually the same estimate as in Proposition \ref{prop}, and therefore Theorem \ref{CLT} still holds true for $\eps=\mu^{-1/2}$ sufficiently small w.r.t. $p+2$, and Corollary \ref{n=3} applies with $\mu$ in place of $m$ and $\exp(\bar{\gamma}-1)$ in place of $\bar{\gamma}$. For the normal approximation ($n=3$), since no perturbing polynomials $p_k$ are concerned, one can scale the jumps $\widehat{X}:=\Sigma_X^{-1/2}X$ and deduce, $\forall\tau\in(0,1)$,
\begin{equation}\label{compound_PP}
\wass_p\lb V_1,\mu^\frac{1}{2}\xi_{\Sigma_X}\rb\lq\lvv\Sigma_X^\frac{1}{2}\rvv\wass_p\lb\Sigma_X^{-\frac{1}{2}}V_1,\mu^\frac{1}{2}\xi_I\rb\lq C_{p,q,\tau}\kappa_{p+2+\tau,\widehat{X}}^{2\vee(p-1)^2}\lambda_{q,X}^{1/2}.
\end{equation}
One can apply the above arguments to the jump process \eqref{jumps} by scaling the jump sizes. For the jumps $X_j^r$ on each annulus $\Omega_r$, define $X_j:=2^rX_j^r$ and $\widehat{X}_j:=\Sigma_X^{-1/2}X_j$ accordingly. For each fixed $r$, the $X_j^r$'s are i.i.d. with characteristic function
\begin{equation*}
\chi^r(s)=\nu_r^{-1}\int_{\Omega_r}e^{is\cdot x}\nu(\td x).
\end{equation*}
This implies that $X$ has scaled covariance $\Sigma_X=\nu_r^{-1}2^{2r}\int_{\Omega_r}xx^\top\nu(\td x)$ with eigenvalues $\lambda_{j,X}=\nu_r^{-1}2^{2r}\lambda_{j,r}$, where $\lambda_{1,r}\lq\cdots\lq\lambda_{q,r}$ are the eigenvalues of $\Sigma_r:=\int_{\Omega_r}xx^\top\nu(\td x)$. Also notice that $\ex|X|^M=\nu_r^{-1}2^{rM}\int_{\Omega_r}|x|^M\nu(\td x)\lq1,~\forall M>0$, implying that $\ex|\widehat{X}|^M\lq\lambda_{1,X}^{-M/2}$.

Thus, if $\Sigma_r$ is non-singular for each $r$, then (assuming $\lambda_{1,X}\lq1$ w.l.o.g.) one can apply \eqref{compound_PP} with parameter $\mu=t\nu_r$:
\begin{equation}\label{wass_r}
\wass_p\lb V_t^r,\sqrt{t}\xi_{\Sigma_r}\rb=2^{-r}\wass_p\lb2^rV_t^r,\sqrt{t\nu_r}\xi_{\Sigma_X}\rb\lq C_{p,q,\tau}\lambda_{1,X}^{-(1\vee\frac{(p-1)^2}{2})(p+2+\tau)}\lambda_{q,r}^{1/2}\nu_r^{-1/2}.
\end{equation}
Denote further $\Sigma_\epsilon:=\int_{0<|x|\lq\epsilon}xx^\top\nu(\td x)$, then from this bound one can find a coupling between $Z_t^\epsilon$ and $\N(0,t\Sigma_\epsilon)$ under suitable conditions.

\begin{theorem}\label{jump_normal_approx}
Suppose $\xi_r(s):=\chi^r(2^rs)$ satisfies \ref{cramer_interp} uniformly for all $r\gq r_0$. If $\nu_r^{-1}=o(2^{-r})$ as $r\to\infty$, then $\forall p\in2\ZZ^+,~t\gq\epsilon$ and $\epsilon$ sufficiently small,
\begin{equation*}
\wass_p\lb Z_t^\epsilon,\sqrt{t}\xi_{\Sigma_\epsilon}\rb\lq C_{p,q}\epsilon.
\end{equation*}
\end{theorem}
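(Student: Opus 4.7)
The plan is to exploit the decomposition \eqref{levy} which realises $Z_t^\epsilon$ as an infinite sum $\sum_{r\geq r_0} V_t^r$ of \emph{independent} compensated compound Poisson processes on the dyadic annuli $\Omega_r$. Since $\Sigma_\epsilon = \sum_{r\geq r_0}\Sigma_r$ by $\sigma$-additivity of the L\'evy measure, the target Gaussian $\sqrt{t}\xi_{\Sigma_\epsilon}$ is equidistributed with $\sum_{r\geq r_0}\sqrt{t}\xi_{\Sigma_r}$ where the $\xi_{\Sigma_r}\sim\N(0,\Sigma_r)$ are chosen independent. First I would build a joint coupling of $Z_t^\epsilon$ and $\sqrt{t}\xi_{\Sigma_\epsilon}$ by selecting, for each $r\geq r_0$, an optimal $\wass_p$-coupling of $(V_t^r,\sqrt{t}\xi_{\Sigma_r})$, and then rendering the family indexed by $r$ mutually independent.

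With this coupling, Minkowski's inequality in $L^p$ collapses the problem to an annulus-wise estimate:
\begin{equation*}
\wass_p\bigl(Z_t^\epsilon,\sqrt{t}\xi_{\Sigma_\epsilon}\bigr)\lq\Bigl\|\sum_{r\geq r_0}(V_t^r-\sqrt{t}\xi_{\Sigma_r})\Bigr\|_{L^p}\lq\sum_{r\geq r_0}\wass_p\bigl(V_t^r,\sqrt{t}\xi_{\Sigma_r}\bigr),
\end{equation*}
to which I would apply the per-annulus bound \eqref{wass_r} derived from Corollary \ref{n=3}. Before doing so I must verify that the hypotheses of \eqref{wass_r} hold uniformly in $r\geq r_0$: the uniform Cram\'er condition on $\xi_r$ is exactly the standing assumption, and the largeness requirement on the Poisson parameter $\mu = t\nu_r$ follows from $t\geq\epsilon=2^{-r_0}$ together with $\nu_r^{-1}=o(2^{-r})$, since then $t\nu_r\geq 2^{r-r_0}(2^{-r}\nu_r)$ diverges as $\epsilon\to0$ uniformly in $r\geq r_0$.

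To extract the rate $\epsilon$, I would use the trivial second-moment bound
\begin{equation*}
\lambda_{q,r}\lq\tr\Sigma_r=\int_{\Omega_r}|x|^2\nu(\td x)\lq 2^{-2r}\nu_r,
\end{equation*}
which reduces the pre-factor $\lambda_{q,r}^{1/2}\nu_r^{-1/2}$ in \eqref{wass_r} to $2^{-r}$. A geometric summation then gives
\begin{equation*}
\wass_p\bigl(Z_t^\epsilon,\sqrt{t}\xi_{\Sigma_\epsilon}\bigr)\lq C_{p,q}\sum_{r\geq r_0}2^{-r}=2C_{p,q}\cdot 2^{-r_0}=2C_{p,q}\epsilon,
\end{equation*}
as required.

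The main technical obstacle I foresee is in the uniformity of the constant in \eqref{wass_r}: it carries a factor involving $\lambda_{1,X}^{-1}$ where $\lambda_{1,X}=\nu_r^{-1}2^{2r}\lambda_{1,r}$ is the smallest eigenvalue of the renormalised jump covariance, and absorbing this factor into a single $C_{p,q}$ requires an implicit non-degeneracy condition of the form $\lambda_{1,r}\gtrsim\nu_r 2^{-2r}$. In the stable-like regime of Assumption \ref{alpha-stable} a polar-coordinate calculation gives $\nu_r\simeq 2^{r\alpha}$ and $\Sigma_r\simeq 2^{-r(2-\alpha)}I$ up to a dimensional factor, so that $\Sigma_X\simeq I$ and the factor collapses automatically, giving the sharpness of the rate $\epsilon$ in the main case of interest.
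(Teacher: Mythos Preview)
Your overall strategy---decompose into annuli, couple each $V_t^r$ with $\sqrt{t}\xi_{\Sigma_r}$ optimally and independently across $r$, sum via Minkowski, and invoke \eqref{wass_r}---is exactly the paper's proof, as is the reduction $\lambda_{q,r}^{1/2}\nu_r^{-1/2}\lq 2^{-r}$ via the trace bound and the verification that $t\nu_r\gq 2^{-r}\nu_r\to\infty$ uniformly in $r\gq r_0$.

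The gap you flag at the end is, however, genuine and not peripheral. The theorem is stated under the uniform Cram\'er hypothesis on $\xi_r$ together with $\nu_r^{-1}=o(2^{-r})$, and under those hypotheses \emph{alone} you must establish $\lambda_{1,X}\gtrsim 1$ (equivalently $\lambda_{1,r}\gtrsim 2^{-2r}\nu_r$) uniformly in $r\gq r_0$; otherwise the power of $\lambda_{1,X}^{-1}$ in \eqref{wass_r} is uncontrolled and the sum need not converge. Your polar-coordinate computation under \ref{alpha-stable} proves only the subsequent Corollary~\ref{clt_approach}, not the theorem itself. The paper closes this gap by showing that the uniform Cram\'er condition \emph{itself} forces the non-degeneracy: writing $\xi_r(s)=|\xi_r(s)|e^{i\theta}$ and expanding, one obtains $\bigl|\int_{\Omega_r}\cos(2^rs\cdot x)\,\nu(\td x)\bigr|\lq\gamma\nu_r$ for every $|s|\gq\rho$; then for any unit vector $v$, using $|v\cdot x|^2\gq 2^{-2r+2}|s|^{-2}\sin^2\tfrac{1}{2}(2^rs\cdot x)$ with $s=\rho v$,
\[
v\cdot\Sigma_r v\ \gq\ 2^{-2r+1}\rho^{-2}\int_{\Omega_r}\bigl(1-\cos(2^r\rho\,v\cdot x)\bigr)\nu(\td x)\ \gq\ 2^{-2r+1}\rho^{-2}(1-\gamma)\,\nu_r,
\]
whence $\lambda_{1,r}\gtrsim 2^{-2r}\nu_r$ with a constant depending only on $(\rho,\gamma,q)$. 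Once you insert this step, your argument is complete and coincides with the paper's.
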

\begin{proof}
Note that on each $\Omega_r$ it is always true that $\lambda_{q,r}\lq\tr\Sigma_r\lq2^{-2r}\nu_r$ and $\lambda_{q,r}\gq q^{-1}\tr\Sigma_r\gq q^{-1}2^{-2(r+1)}\nu_r$. Write $\xi_r(s)=|\xi_r(s)|e^{i\theta}$, where $\theta=\theta(r,s)$ satisfies $\int_{\Omega_r}\sin(2^rs\cdot x-\theta)\nu(\td x)=0$. Then $\int_{\Omega_r}\sin(2^rs\cdot x)\nu(\td x)=\tan\theta\int_{\Omega_r}\cos(2^rs\cdot x)\nu(\td x)$ if $\theta\not\equiv\pm\pi/2$ mod $\pi$, and otherwise $\int_{\Omega_r}\cos(2^rs\cdot x)\nu(\td x)=0$. By the uniform Cram\'er's condition for $\xi_r(s)$, there exist $\rho>0,~\gamma\in(0,1)$ s.t. $\forall r\gq r_0$ and $|s|\gq\rho$,
\begin{equation*}
|\xi_r(s)|=\nu_r^{-1}\int_{\Omega_r}\cos(2^rs\cdot x-\theta)\nu(\td x)\in[0,\gamma].
\end{equation*}
If $\theta\not\equiv\pm\pi/2$ mod $\pi$, expand out the integrand using the identity $\cos(\alpha-\beta)=\cos\alpha\cos\beta+\sin\alpha\sin\beta,~\forall\alpha,\beta\in\RR$, replace the term $\int_{\Omega_r}\sin(2^rs\cdot x)\nu(\td x)$ and rearrange to get
\begin{equation*}
(\nu_r\cos\theta)^{-1}\int_{\Omega_r}\cos(2^rs\cdot x)\nu(\td x)\in[0,\gamma].
\end{equation*}
Therefore, regardless of the values of $\theta$, one always has $\lv\int_{\Omega_r}\cos(2^rs\cdot x)\nu(\td x)\rv\lq\gamma\nu_r$ for $|s|\gq\rho$. Write $s=|s|v$ where $v\in\mathbb{S}^{q-1}$ is a unit vector. Then for $|s|\gq\rho$,
\begin{align*}
v\cdot\Sigma_rv=&\int_{\Omega_r}|v\cdot x|^2\nu(\td x)\gq2^{-2r+2}|s|^{-2}\int_{\Omega_r}\sin^2(2^{r-1}s\cdot x)\nu(\td x)\\
&=2^{-2r+1}|s|^{-2}\int_{\Omega_r}\lb1-\cos(2^rs\cdot x)\rb\nu(\td x)\gq2^{-2r+1}\rho^{-2}(1-\gamma)\nu_r.
\end{align*}
This means $\lambda_{1,r}\gtrsim2^{-2r}\nu_r$ by choosing $v$ to be the eigenvector of $\lambda_{1,r}$. Hence $\lambda_{1,r}\simeq\lambda_{q,r}\simeq2^{-2r}\nu_r$ and $\lambda_{1,X}=\nu_r^{-1}2^{2r}\lambda_{1,r}\simeq1,~\forall r\gq r_0$.

Since $\xi_r(s)$ is the characteristic function of $X=2^rX^r$, the uniform Cram\'er's condition validates the bound \eqref{gamma_bar} with a uniform $\bar{\gamma}=\bar{\gamma}(\rho,\gamma)$ and \eqref{compound_PP} holds with $\mu=t\nu_r\gq\epsilon\nu_r\gq2^{-r}\nu_r$ sufficiently large w.r.t. $p+2$. More precisely, one can choose $\epsilon$ sufficiently small s.t. for all $r\gq r_0$, similar to \eqref{large_m},
\begin{equation*}
\lb e^{\bar{\gamma}-1}\rb^{2^{-r}\nu_r}\lb2^{-r}\nu_r\rb^{(q+1)(p+3)/2}\lesssim1.
\end{equation*}
Thus, all the arguments leading towards \eqref{wass_r} are justified, which is immediately reduced to the bound $\wass_p\lb V_t^r,\sqrt{t}\xi_{\Sigma_r}\rb\lq C_{p,q}2^{-r}$, and therefore
\begin{equation*}
\wass_p\lb Z_t^\epsilon,\sqrt{t}\xi_{\Sigma_\epsilon}\rb=\wass_p\lb\sum_{r=r_0}^\infty V_t^r,\sum_{r=r_0}^\infty\sqrt{t}\xi_{\Sigma_r}\rb\lq C_{p,q}\sum_{r=r_0}^\infty2^{-r}=C_{p,q}\epsilon.
\end{equation*}
\end{proof}

Together with the finite second moment of $\nu$, the theorem above requires the order of $\nu_r$ is between $O(2^{r+})$ and $O(2^{2r})$ as $r\to\infty$, i.e. the L\'evy measure needs to be sufficiently singular near $0$. The uniform Cram\'er condition requires certain comparability between $\nu$ and the Lebesgue measure $\Lambda$, conditional on $\Omega_r$. The following lemma gives a sufficient condition.
%\begin{lemma}\label{sin_integral_r}
%If $\nu$ satisfies \ref{alpha-stable} and $\epsilon<\tau$, then there is a constant $\gamma'=\gamma'(q)\in(0,1/2)$ s.t. $\forall|s|\gq\pi2^r,~\theta\in[-\pi,\pi]$ and $\forall r\gq r_0$,
%\begin{equation*}
%A_{\Omega_r}(s,\theta):=\int_{\Omega_r}2\sin^2\frac{1}{2}(s\cdot x+\theta)\nu(\td x)\gq2^{-q-\alpha}\gamma'\nu_r.
%\end{equation*}
%\end{lemma}
%\begin{proof}
%Consider the following sets on each of which $\sin^2(s\cdot x/2+\theta/2)\gq1/2$:
%\begin{equation*}
%D_k=D_k(s,\theta):=\left\{x\in\Omega_r:~2k\pi+\pi/2\lq s\cdot x+\theta\lq2k\pi+3\pi/2\right\},~k\in\ZZ.
%\end{equation*}
%Each $D_k$ is a ``stripe" across the annulus $\Omega_r$ with width $\pi/|s|$ and equally distanced from each other by $\pi/|s|$. This can be seen by rotating so that $s$ lies on one axis. Thus for $|s|\gq\pi2^r$ there is at least one non-empty $D_k$. With $\Lambda$ being the Lebesgue measure on $\RR^q$, the ratio $\Lambda\lb\bigcup_kD_k\rb/\Lambda(\Omega_r)$ approaches $1/2$ by symmetry as $|s|\to\infty$, regardless of the translation $\theta$. Thus the ratio is bounded from below by some $\gamma'=\gamma'(q)\in(0,1/2)$ for all $|s|\gq\pi2^r$, and 
%\begin{equation*}
%\frac{\nu\lb\bigcup_kD_k\rb}{\nu(\Omega_r)}\gq\frac{2^{-(r+1)(q+\alpha)}\Lambda\lb\bigcup_kD_k\rb}{2^{-r(q+\alpha)}\Lambda(\Omega_r)}\gq2^{-q-\alpha}\gamma',
%\end{equation*}
%and the result follows from that $A_{\Omega_r}(s,\theta)\gq A_{\bigcup_kD_k}(s,\theta)\gq\nu\lb\bigcup_kD_k\rb$.
%\end{proof}
\begin{lemma}\label{sufficient}
If there exist $a,b\in(0,1)$ s.t. for each $r\gq r_0$, any measurable subset $\Gamma_r$ of $\Omega_r$ satisfying $\Lambda(\Gamma_r)/\Lambda(\Omega_r)\gq a$ must have that $\nu(\Gamma_r)/\nu(\Omega_r)\gq b$, then $\xi_r(s)=\chi^r(2^rs)$ satisfies \ref{cramer_interp} uniformly for all $r\gq r_0$.
\end{lemma}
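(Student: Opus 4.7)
The plan is to use the hypothesis as a comparison between $\nu$ and $\Lambda$ on $\Omega_r$, applied to a well-chosen sublevel set of a cosine. Writing $\xi_r(s)=|\xi_r(s)|e^{i\theta_r}$ with $\theta_r=\theta_r(s)$, one has
\begin{equation*}
1-|\xi_r(s)|=\nu_r^{-1}\int_{\Omega_r}\lb 1-\cos(2^rs\cdot x-\theta_r)\rb\nu(\td x).
\end{equation*}
If for every $r\gq r_0$ and $|s|\gq\rho$ I can produce a set $G_r\subset\Omega_r$ on which $1-\cos(2^rs\cdot x-\theta_r)\gq c$ and $\Lambda(G_r)/\Lambda(\Omega_r)\gq a$, then the hypothesis forces $\nu(G_r)/\nu_r\gq b$, whence $|\xi_r(s)|\lq1-bc=:\gamma<1$ uniformly in $r$, which is precisely \ref{cramer_interp}.

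The substantive step is therefore to bound the Lebesgue measure of the sublevel set $B_r:=\{x\in\Omega_r:1-\cos(2^rs\cdot x-\theta_r)<c\}$ from above. Setting $\delta_c:=\arccos(1-c)\sim\sqrt{2c}$, the set $B_r$ is the intersection of $\Omega_r$ with a $2\pi/(2^r|s|)$-periodic family of slabs of width $2\delta_c/(2^r|s|)$ perpendicular to $s$. Decomposing $\RR^q=\RR\cdot(s/|s|)\oplus s^\perp$ and applying Fubini, on each one-dimensional fibre of $\Omega_r$ (at most two intervals of combined length of order $2^{-r}$) the $1$-D slab measure is at most (fibre length)$\cdot\delta_c/\pi$ plus at most two partial periods of total length $\lq 4\delta_c/(2^r|s|)$. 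Integrating over the projection of $\Omega_r$ onto $s^\perp$, whose $(q-1)$-dim volume is $\lq C_q2^{-r(q-1)}$, gives
\begin{equation*}
\Lambda(B_r)\lq\frac{\delta_c}{\pi}\Lambda(\Omega_r)+C_q\frac{\delta_c}{|s|}2^{-rq}.
\end{equation*}
Since $\Lambda(\Omega_r)\simeq 2^{-rq}$, the relative error is $O(\delta_c/|s|)$, \emph{independent of $r$}.

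Choosing $c$ so small that $\delta_c/\pi<(1-a)/2$ and $\rho$ so large that $C_q\delta_c/\rho<(1-a)/2$ yields $\Lambda(B_r)\lq(1-a)\Lambda(\Omega_r)$ for all $|s|\gq\rho$ and $r\gq r_0$, so that $G_r:=\Omega_r\setminus B_r$ furnishes the desired Lebesgue lower bound, completing the argument. The main (though essentially routine) obstacle is the bookkeeping in the Fubini estimate: one must verify that the $2^r$ dilation built into $\xi_r$ exactly cancels the $2^{-r}$ scale of $\Omega_r$, so that the contribution of partial periods near the boundary of $\Omega_r$ produces an error depending only on $|s|$, not on $r$. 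This scale-matching is precisely why $\xi_r$ is defined with the $2^r$ rescaling, and it is what makes the uniform Cram\'er condition inheritable from the measure comparison assumed in the hypothesis.
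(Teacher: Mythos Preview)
Your proof is correct and follows essentially the same strategy as the paper: write $1-|\xi_r(s)|$ as $\nu_r^{-1}\int_{\Omega_r}2\sin^2\tfrac12(2^rs\cdot x-\theta_r)\,\nu(\td x)$, exhibit a union of parallel slabs in $\Omega_r$ on which the integrand is bounded below, show this union occupies at least a fraction $a$ of $\Lambda(\Omega_r)$ once $|s|$ is large, and invoke the hypothesis to transfer this to $\nu$. The paper phrases the Lebesgue-measure step more tersely (``the ratio $\Lambda(\Gamma_r)/\Lambda(\Omega_r)$ approaches $a'$ as $|v|\to\infty$'' with $v=2^rs$, the scale-matching left implicit), whereas you carry out the Fubini slab-counting explicitly and isolate the boundary contribution $C_q\delta_c/|s|$ that makes the uniformity in $r$ transparent; this is the same content, just unpacked.
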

\begin{proof}
For any $a'\in(0,1)$ denote $b'=\sin^2\frac{\pi}{2}(1-a')\in(0,1)$. For any $\theta\in\RR,~v\in\RR^q$, consider
\begin{equation*}
D_k=D_k(v,\theta):=\left\{x\in\Omega_r:~2k\pi+(1-a')\pi\lq v\cdot x-\theta\lq2(k+1)\pi-(1-a')\pi\right\},~k\in\ZZ,
\end{equation*}
on each of which $\sin^2\frac{1}{2}(v\cdot x-\theta)\gq b'$. They are parallel ``stripes" across the annulus $\Omega_r$ with width $2a'\pi/|v|$ equidistantly away from each other by $2(1-a')\pi/|v|$. This can be seen by rotating so that $v$ lies on one axis. Thus for $|v|>\pi2^{r+1}$ there is at least one non-empty $D_k$. Denote $\Gamma_r=\bigcup_{k\in\ZZ}D_k$, then the ratio $\Lambda(\Gamma_r)/\Lambda(\Omega_r)$ approaches $a'$ as $|v|\to\infty$, regardless of the translation $\theta$. Therefore one can find some constants $\rho>0$ and $\gamma'=\gamma'(\rho,q)\in(0,a')$ s.t. for all $|v|\gq2^r\rho,~\Lambda(\Gamma_r)/\Lambda(\Omega_r)\gq\gamma'$. Choose $\gamma'=a$ as given in the assumption, then $\nu(\Gamma_r)/\nu(\Omega_r)\gq b$ for all $|v|\gq2^r\rho$. Write $\xi_r(s)=|\xi_r(s)|e^{i\theta}$ for some $\theta=\theta(r,s)$, then
\begin{equation*}
1-|\xi_r(s)|\gq2\nu_r^{-1}\int_{\Gamma_r}\sin^2\frac{1}{2}(2^rs\cdot x-\theta)\nu(\td x)\gq2b'\nu(\Gamma_r)/\nu(\Omega_r)\gq2b'b,
\end{equation*}
for all $r\gq r_0$, and the result follows by setting $v=2^rs$ and $\gamma=1-2b'b\in(0,1)$.
\end{proof}

\begin{corollary}\label{clt_approach}
If $\nu$ satisfies \ref{alpha-stable} for $\alpha\in(1,2)$, then Theorem \ref{jump_normal_approx} holds for $\epsilon\in(0,\tau)$ sufficiently small.
\end{corollary}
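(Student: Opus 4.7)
The plan is to reduce the corollary to verifying the two hypotheses of Theorem \ref{jump_normal_approx}: the growth condition $\nu_r^{-1}=o(2^{-r})$ as $r\to\infty$, and the uniform Cram\'er condition on $\xi_r(s):=\chi^r(2^rs)$. I would first restrict to $\epsilon\in(0,\tau)$ so that, for every $r\gq r_0=-\log_2\epsilon$, the entire annulus $\Omega_r$ lies in $\{0<|z|\lq\tau\}$; this is exactly the regime on which the density comparison from \ref{alpha-stable} is available, and without it none of the subsequent estimates apply to $\nu$ restricted to $\Omega_r$.

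For the growth condition, a polar-coordinate computation under \ref{alpha-stable} gives
$$
\nu_r=\nu(\Omega_r)\simeq\int_{2^{-r-1}<|z|\lq2^{-r}}|z|^{-q-\alpha}\td z\simeq\int_{2^{-r-1}}^{2^{-r}}\rho^{-1-\alpha}\td\rho\simeq2^{r\alpha},
$$
so $\nu_r^{-1}\simeq2^{-r\alpha}$. Since $\alpha>1$, this is $o(2^{-r})$, and this is precisely where the restriction $\alpha>1$ becomes forced; no effort in the other steps can compensate for $\alpha\lq1$. For the Cram\'er condition, rather than estimate $\xi_r$ directly I would invoke Lemma \ref{sufficient}. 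On $\Omega_r$, the density $\td\nu/\td\Lambda$ is trapped between constant multiples of $2^{r(q+\alpha)}$ and $2^{(r+1)(q+\alpha)}$, so its oscillation ratio on $\Omega_r$ is at most $2^{q+\alpha}$, independent of $r$. Consequently, for any measurable $\Gamma_r\subset\Omega_r$,
$$
\frac{\nu(\Gamma_r)}{\nu(\Omega_r)}\gtrsim2^{-(q+\alpha)}\cdot\frac{\Lambda(\Gamma_r)}{\Lambda(\Omega_r)},
$$
so fixing any $a\in(0,1)$, say $a=1/2$, produces a corresponding $b\in(0,1)$ that works uniformly for all $r\gq r_0$, and Lemma \ref{sufficient} delivers the uniform Cram\'er condition.

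I do not anticipate any substantial obstacle, since both required conditions are essentially baked into \ref{alpha-stable} for $\alpha>1$. The step to be most careful with is the application of Lemma \ref{sufficient}, as it requires the oscillation of $\td\nu/\td\Lambda$ on each $\Omega_r$ to be bounded \emph{uniformly} in $r$; this is what the dyadic scaling of the annuli $\Omega_r=\{2^{-r-1}<|z|\lq2^{-r}\}$ is doing for us, since it turns the self-similarity of $|z|^{-q-\alpha}$ into a fixed-factor oscillation. The only remaining book-keeping is to shrink $\epsilon$ further if necessary so that the smallness-of-$\epsilon$ requirement in Theorem \ref{jump_normal_approx}, with the constants $\bar\gamma,\rho$ just produced, is met; then Theorem \ref{jump_normal_approx} applies verbatim and yields $\wass_p(Z_t^\epsilon,\sqrt{t}\xi_{\Sigma_\epsilon})\lq C_{p,q}\epsilon$ for $t\gq\epsilon$.
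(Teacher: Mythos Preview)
Your proposal is correct and follows essentially the same route as the paper: both verify the two hypotheses of Theorem \ref{jump_normal_approx} by computing $\nu_r\simeq2^{\alpha r}$ (hence $\nu_r^{-1}=o(2^{-r})$ for $\alpha>1$) and by bounding the oscillation of $\td\nu/\td\Lambda$ on each $\Omega_r$ by $2^{q+\alpha}$ to invoke Lemma \ref{sufficient}. Your additional remarks about restricting to $\epsilon<\tau$ and about shrinking $\epsilon$ further to meet the smallness requirement are sound book-keeping that the paper leaves implicit.
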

\begin{proof}
One just needs to check that the assumptions in Theorem \ref{jump_normal_approx} are satisfied. First of all, for $\alpha\in(1,2)$ and $\forall r\gq r_0$,
\begin{equation*}
\nu_r\simeq\int_{\Omega_r}|x|^{-q-\alpha}\td x=C_q\frac{2^\alpha-1}{\alpha}2^{\alpha r}.
\end{equation*}
Then $2^r\nu_r^{-1}=o(1)$ for $\alpha\in(1,2)$. For any measurable subset $\Gamma_r$ of $\Omega_r$,
\begin{equation*}
\frac{\nu(\Gamma_r)}{\nu(\Omega_r)}\gq\frac{\int_{\Gamma_r}|x|^{-q-\alpha}\td x}{\int_{\Omega_r}|x|^{-q-\alpha}\td x}\gq2^{-q-\alpha}\frac{\Lambda(\Gamma_r)}{\Lambda(\Omega_r)},
\end{equation*}
which validates Lemma \ref{sufficient}.
\end{proof}

It is worth mentioning that if the condition \ref{alpha-stable} is assumed a priori, then one could directly use the L\'evy-Khintchine formula to study the global behaviour of the characteristic function of $Z_t^\epsilon$, which would greatly simplify the analysis of Section 1, but the same arguments used in the proof of Theorem \ref{CLT} would still be needed for the coupling result.

\subsection{A Coupling for Euler's Approximation}

Given the coupling result above, one finally arrives at the stage of recovering Fournier's results \cite{fournier2011SimappLevstodifequ} on the Euler approximation of the SDE \eqref{sde} driven by the L\'evy process \eqref{noise}:
\begin{align}
x_t=&x_0+\int_0^t\sigma(x_{s-})\td Z_s,~t\in[0,T],\label{sde_2}\\
Z_t=&at+BW_t+\int_0^t\int_{\RR^q\setminus\{0\}}z\tilde{N}(\td z,\td s).\label{noise_2}
\end{align}
For fixed $h,\epsilon\in(0,1)$ introduce the following random variable
\begin{equation}\label{increment}
\bar{\Delta}_1:=\bar{a}h+\bar{B}\sqrt{h}\xi_I+\sum_{j=1}^{N_h^\epsilon}Y_j^\epsilon,
\end{equation}
and take independent copies $\bar{\Delta}_2,\cdots,\bar{\Delta}_{[T/h]}$, where $\{Y_j^\epsilon\}$ are i.i.d. random variables having distribution $\one_{|z|>\epsilon}\nu(\td z)/\nu(|z|>\epsilon)$, $N_h^\epsilon$ is Poisson with parameter $h\nu(\{|z|>\epsilon\})$, and the coefficients $\bar{a}=a-\int_{|z|>\epsilon}z\nu(\td z),~\bar{B}:=\lb BB^\top+\Sigma_\epsilon\rb^{1/2},~\Sigma_\epsilon=\int_{0<|z|\lq\epsilon}zz^\top\nu(\td z)$. For $t_k=kh,~k=1,\cdots,[T/h]$, write the increments $\Delta_k:=Z_{t_k}-Z_{t_{k-1}}$. Then one may attempt to find a coupling between the standard Euler's approximation
\begin{equation*}
X_{k+1}:=X_k+\sigma(X_k)\Delta_{k+1},~X_0=x_0,
\end{equation*}
and the numerical scheme
\begin{equation}\label{scheme}
\bx_{k+1}:=\bx_k+\sigma(\bx_k)\bar{\Delta}_{k+1},~\bx_0=x_0.
\end{equation}
For that one claims the following statement as an analogue to Lemma 5.2 in \cite{fournier2011SimappLevstodifequ}:
\begin{proposition}\label{coupling}
If $\nu$ satisfies the conditions of Theorem \ref{jump_normal_approx}, then for $\epsilon$ sufficiently small there exist on the same probability space two sequences of i.i.d. random variables $\{\Delta'_k\},~\{\bar{\Delta}'_k\}$, with the same distributions as $\{\Delta_k\}$ and $\{\bar{\Delta}_k\}$ respectively, s.t.
\begin{equation*}
\lb\ex\lv\Delta'_k-\bar{\Delta}'_k\rv^p\rb^\frac{1}{p}\lq C_q\epsilon,
\end{equation*}
for all $k\in\ZZ^+$ and $p\in2\ZZ^+$, and $\ex\Delta'_k=\ex\bar{\Delta}'_k=ah,~\var\Delta'_k=\var\bar{\Delta}'_k=\lb BB^\top+\Sigma_\epsilon\rb h$.
\end{proposition}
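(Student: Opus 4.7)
The plan is to exploit the L\'{e}vy-It\=o decomposition of $\Delta_k = Z_{t_k}-Z_{t_{k-1}}$ and couple the pieces one by one, matching the structure of $\bar{\Delta}_k$. In distribution one has
\begin{equation*}
\Delta_k \stackrel{d}{=} \bar{a}h + BW_h^{(k)} + Z_h^{\epsilon,(k)} + \sum_{j=1}^{N^{(k)}} Y_j^{(k)},
\end{equation*}
with the four summands independent, $W_h^{(k)}\sim\N(0,hI_{q_1})$, $Z_h^{\epsilon,(k)}$ the small-jump increment on $(t_{k-1},t_k]$, and $\sum_{j=1}^{N^{(k)}} Y_j^{(k)}$ already sharing the law of the big-jump part of $\bar{\Delta}_k$. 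The drift and big-jump pieces therefore agree with those of $\bar{\Delta}_k$ automatically, and since $BW_h^{(k)}+Z_h^{\epsilon,(k)}$ has covariance $(BB^\top+\Sigma_\epsilon)h$, the same as $\bar{B}\sqrt{h}\xi_I$, the whole task reduces to coupling these two same-covariance pieces with small $L^p$-error.

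The execution is then essentially forced. For each $k$, invoke Theorem \ref{jump_normal_approx} (applicable once $\epsilon\lq h$, which for a fixed time step is absorbed into ``$\epsilon$ sufficiently small'') to obtain, on an auxiliary space, a pair $(\tilde Z^{(k)},\eta^{(k)})$ with $\tilde Z^{(k)}\stackrel{d}{=}Z_h^{\epsilon,(k)}$, $\eta^{(k)}\sim\N(0,h\Sigma_\epsilon)$, and $\ex|\tilde Z^{(k)}-\eta^{(k)}|^p\lq(C_{p,q}\epsilon)^p$. Take these pairs independent across $k$, together with independent Brownian increments $\{W_h^{(k)}\}$ and independent marked big-jump samples $\{N^{(k)},Y_j^{(k)}\}$, and set
\begin{equation*}
\Delta'_k := \bar{a}h + BW_h^{(k)} + \tilde Z^{(k)} + \sum_{j=1}^{N^{(k)}} Y_j^{(k)}, \qquad \bar{\Delta}'_k := \bar{a}h + BW_h^{(k)} + \eta^{(k)} + \sum_{j=1}^{N^{(k)}} Y_j^{(k)}.
\end{equation*}
L\'{e}vy-It\=o gives $\Delta'_k\stackrel{d}{=}\Delta_k$; the sum $BW_h^{(k)}+\eta^{(k)}$ is centered Gaussian with covariance $(BB^\top+\Sigma_\epsilon)h=\bar{B}\bar{B}^\top h$, so it has the same law as $\bar{B}\sqrt{h}\xi_I$ and hence $\bar{\Delta}'_k\stackrel{d}{=}\bar{\Delta}_k$. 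Independence across $k$ is immediate from the construction, as are the claimed mean and variance identities. The $L^p$-error collapses to $\|\Delta'_k-\bar{\Delta}'_k\|_p=\|\tilde Z^{(k)}-\eta^{(k)}\|_p\lq C_{p,q}\epsilon$.

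The only real obstacle is the subcase $h<\epsilon$, where Theorem \ref{jump_normal_approx} does not directly apply; the natural reading of ``$\epsilon$ sufficiently small'' for fixed $h$ makes this moot, and I would proceed under that convention. If one also wants to cover small $h$, the edge case must be patched by hand by using the moment bounds for compensated compound Poisson integrals together with, under \ref{alpha-stable}, the asymptotics $\tr\Sigma_\epsilon\lesssim\epsilon^{2-\alpha}$, verifying case by case that the error stays within $C_q\epsilon$; this would be the fiddliest step of the argument, and I would mirror Fournier's Lemma 5.2 in \cite{fournier2011SimappLevstodifequ} to push it through.
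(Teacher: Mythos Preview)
Your proposal is correct and follows essentially the same route as the paper's proof: share the drift, Brownian and big-jump components between $\Delta'_k$ and $\bar{\Delta}'_k$, and couple only the small-jump increment $Z_h^\epsilon$ with a Gaussian $\eta\sim\N(0,h\Sigma_\epsilon)$ via Theorem~\ref{jump_normal_approx}, so that $BW_h+\eta$ reproduces the law of $\bar{B}\sqrt{h}\xi_I$. Your handling of the constraint $t\gq\epsilon$ in Theorem~\ref{jump_normal_approx} is also in line with the paper's intended regime (ultimately $\epsilon=h$ is chosen), so no extra patching is needed.
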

\begin{proof}
By Theorem \ref{jump_normal_approx}, for $\epsilon$ sufficiently small there is a standard normal random variable $\xi'_I$ on the same probability space s.t.
\begin{equation*}
\lb\ex\lv\int_0^h\int_{0<|z|\lq\epsilon}z\tilde{N}(\td z,\td s)-h\Sigma_\epsilon^\frac{1}{2}\xi'_I\rv^p\rb^\frac{1}{p}\lq C_q\epsilon,
\end{equation*}
according to the definition of the $\wass_p$ distance. If one sets
\begin{align*}
\Delta'_1:=&ah+BW_h+\int_0^h\int_{0<|z|\lq\epsilon}z\tilde{N}(\td z,\td s)+\int_0^h\int_{|z|>\epsilon}z\tilde{N}(\td z,\td s),\\
\bar{\Delta}'_1:=&ah+BW_h+h\Sigma_\epsilon^\frac{1}{2}\xi'_I+\int_0^h\int_{|z|>\epsilon}z\tilde{N}(\td z,\td s),
\end{align*}
then $\Delta'_1$ has the same law as $\Delta_1$, and $\bar{\Delta}'_1$ has the same law as $\bar{\Delta}_1$. Thus the result follows by taking independent copies.
\end{proof}

Proposition \ref{coupling} can be immediately used to partially recover the main results in \cite{fournier2011SimappLevstodifequ} (Theorem 2.2): the proof is independent of the key coupling result (Lemma 5.2), so one can replace the latter with the proposition above. Hence one can restate those results as follows: 
\begin{theorem}
Suppose $\sigma:\RR^d\to\RR^{d\times q}$ is bounded and Lipschitz, and the L\'evy measure $\nu$ for the L\'evy process \eqref{noise_2} satisfies conditions of Theorem \ref{jump_normal_approx}. Let $\epsilon,h\in(0,1)$, and $\{x_t\}$ be the unique solution to the SDE \eqref{sde_2} for $t\in[0,T]$. Then for $\rho_h(t)=[t/h]h$ and $\epsilon$ sufficiently small, there exists a coupling between $\{x_t\}$ and $\{\bar{X}_{\rho_n(t)}\}$ defined by \eqref{scheme} and \eqref{increment} s.t.
\begin{equation*}
\ex\sup_{t\in[0,T]}\lv x_t-\bar{X}_{\rho_h(t)}\rv^2\lq C_1(h+\epsilon).
\end{equation*}
Moreover, if $\nu(\{|z|>\epsilon\})=0$, i.e. $Z_t=Z_t^\epsilon$ as in \eqref{noise_2}, and $\{\tilde{x}_t^\epsilon\}$ is the unique solution to the continuous SDE $\tilde{x}_t^\epsilon=x_0+\int_0^t\sigma(\tilde{x}_t^\epsilon)\td\tilde{Z}_t^\epsilon$ where $\tilde{Z}_t^\epsilon=at+(BB^\top+\Sigma_\epsilon)^{1/2}W_t$, then there exists a coupling between $x_t$ and $\tilde{x}_t^\epsilon$ s.t.
\begin{equation*}
\ex\sup_{t\in[0,T]}\lv x_t-\tilde{x}_t^\epsilon\rv^2\lq C_2\epsilon.
\end{equation*}
The constants $C_1,C_2$ depend on $d,q,T,|a|,\|B\|,\|\sigma\|_\infty,\Sigma_\epsilon$.
\end{theorem}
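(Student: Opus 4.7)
The plan is to follow Fournier's proof of Theorem 2.2 in \cite{fournier2011SimappLevstodifequ} almost verbatim, since his dimensional restriction enters \emph{only} through his coupling lemma (his Lemma 5.2), and Proposition \ref{coupling} above is a drop-in replacement valid for arbitrary $q$. The remainder of Fournier's SDE argument is dimension-insensitive and uses only the Lipschitz continuity of $\sigma$, the Burkholder-Davis-Gundy (BDG) inequality, and (discrete) Gronwall.

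First I would invoke Proposition \ref{coupling} with $p=2$ to realise, on a single probability space, i.i.d. pairs $(\Delta'_k,\bar{\Delta}'_k)_{k=1}^{[T/h]}$ with $\ex|\Delta'_k-\bar{\Delta}'_k|^2\lq C_q^2\eps^2$ and with the prescribed marginals and matched means/covariances. From $\{\Delta'_k\}$ reconstruct a \levy{} process $Z'$ on $[0,T]$ with the law of $Z$ and $Z'_{t_k}-Z'_{t_{k-1}}=\Delta'_k$, let $\{x'_t\}$ be the unique solution of \eqref{sde_2} driven by $Z'$, and let $\{\bx'_k\}$ be the scheme \eqref{scheme} with $\bar{\Delta}'_k$. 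Passing to laws, this yields the desired coupling. At grid points I would decompose
\begin{equation*}
x'_{t_k}-\bx'_k=\sum_{j=1}^k\lb\sigma(x'_{t_{j-1}})-\sigma(\bx'_{j-1})\rb\Delta'_j+M_k+R_k,
\end{equation*}
where $M_k:=\sum_{j=1}^k\sigma(\bx'_{j-1})(\Delta'_j-\bar{\Delta}'_j)$ is a discrete martingale by the mean-matching $\ex\Delta'_j=\ex\bar{\Delta}'_j=ah$, and $R_k:=\sum_{j=1}^k\int_{t_{j-1}}^{t_j}[\sigma(x'_{s-})-\sigma(x'_{t_{j-1}})]\td Z'_s$ is the usual Euler discretisation remainder. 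By orthogonality of martingale differences plus boundedness of $\sigma$, $\ex|M_{[T/h]}|^2\lq\|\sigma\|_\infty^2(T/h)\cdot C_q^2\eps^2$, contributing an $O(\eps)$ term once one observes (as Fournier does) that the relevant regime is $\eps\lesssim h$; similarly the standard estimate $\ex|R_{[T/h]}|^2\lq C h$ follows from BDG, $\int|z|^2\nu(\td z)<\infty$, and Lipschitz continuity of $\sigma$. Discrete Gronwall absorbs the first sum into the other two, yielding $\ex\sup_{k\lq[T/h]}|x'_{t_k}-\bx'_k|^2\lq C_1(h+\eps)$. The extension from grid points to $\sup_{t\in[0,T]}$ is obtained by estimating $\ex\sup_{t\in[t_{k-1},t_k]}|x'_t-x'_{t_{k-1}}|^2\lq Ch$ via BDG on each subinterval and taking a union.

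For the second claim, when $\nu(\{|z|>\eps\})=0$ the noise reduces to $Z_t=at+BW_t+Z_t^\eps$ and the comparison process to $\tilde Z_t^\eps=at+(BB^\top+\Sigma_\eps)^{1/2}W_t$. Apply Theorem \ref{jump_normal_approx} on each subinterval $[t_{k-1},t_k]$ (with $h\gq\eps$) to couple the independent increments $Z_{t_k}^\eps-Z_{t_{k-1}}^\eps$ with $\sqrt{h}\,\Sigma_\eps^{1/2}\xi_I^{(k)}$ at $L^2$-cost $O(\eps)$, glue the $\xi_I^{(k)}$'s into a Brownian motion $W^\sharp$ independent of $W$, and combine $BW+\Sigma_\eps^{1/2}W^\sharp$ into a single Brownian motion of covariance $BB^\top+\Sigma_\eps$. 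An SDE-stability argument identical in structure to the one above (Lipschitz-Gronwall driven by $\ex\sup_t|Z_t^\eps-\Sigma_\eps^{1/2}W_t^\sharp|^2\lq C\eps$) then yields $\ex\sup_t|x_t-\tilde x_t^\eps|^2\lq C_2\eps$.

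The main obstacle is precisely the delicate bookkeeping surrounding the martingale term $M_k$: the per-step coupling bound $\eps$ sums to an $L^2$-error $\sqrt{T\eps^2/h}$ rather than $\eps$, and one has to use the martingale (rather than triangle-inequality) structure together with the natural constraint $\eps\lesssim h$ implicit in Theorem \ref{jump_normal_approx} to keep the final bound linear in $h+\eps$. Beyond this, no new ideas are required: the proof is a mechanical transcription of Fournier's with Proposition \ref{coupling} substituted, and this is why only a \emph{partial} recovery of his result is claimed.
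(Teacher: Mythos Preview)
Your proposal is correct and is exactly the approach the paper takes: the paper does not give an independent proof but refers the reader to Fournier's proof of Theorem~2.2 in \cite{fournier2011SimappLevstodifequ}, noting only that Proposition~\ref{coupling} replaces his Lemma~5.2 (with $\beta_\epsilon(\nu)$ replaced by $\epsilon^2$) and that the remaining calculations carry over to $q\gq2$ unchanged. Your sketch of the decomposition into a Gronwall term, the martingale $M_k$, and the discretisation remainder $R_k$, together with the observation that the constraint $h\gq\epsilon$ implicit in Theorem~\ref{jump_normal_approx} keeps the martingale contribution $O(\epsilon^2/h)\lq O(\epsilon)$, is precisely the content of Fournier's argument.

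One small difference worth flagging concerns the second statement. The paper's route is slightly more economical than yours: rather than attempting a direct pathwise driver bound $\ex\sup_t|Z_t^\epsilon-\Sigma_\epsilon^{1/2}W_t^\sharp|^2\lq C\epsilon$ (which does not follow immediately from the endpoint coupling of Theorem~\ref{jump_normal_approx} and needs an extra maximal-inequality argument), the paper simply applies the \emph{first} statement with $\bar\Delta_k=\bar a h+\bar B\sqrt{h}\xi_I$ (no big-jump part) and $h=\epsilon$, observes that this $\bar X$ is exactly the Euler scheme for the continuous SDE $\tilde x^\epsilon$, and concludes by the triangle inequality together with the standard $O(h)$ Euler error for the diffusion. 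Your approach would also work after filling in the maximal estimate, but the paper's detour through the discrete scheme avoids that step entirely.
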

Instead of repeating the same arguments of Fournier \cite{fournier2011SimappLevstodifequ}, the reader is referred to the proof of Theorem 2.2 therein. Note that Proposition \ref{coupling} above allows one to replace the $\beta_\epsilon(\nu)$ in Lemma 5.2 with $\epsilon^2$, and the rest of the calculations can be readily generalised to the multi-dimensional case. In particular, under the assumption \ref{alpha-stable} for some $\alpha\in(1,2)$, by choosing $\epsilon=h$ one recovers the mean-square convergence rate $O(h)$ and the computational cost $E_\nu(h)=O(h^{-1}+h^{-\alpha})$ is controlled. The second statement corresponds to Corollary 3.2 in \cite{fournier2011SimappLevstodifequ}. For that, one simply takes $\bar{\Delta}_1=\bar{a}h+\bar{B}\sqrt{h}\xi_I$ instead of \eqref{increment} and $h=\epsilon$, and runs the same argument as in Proposition \ref{coupling}, omitting the big-jump part.

The general case where $\sigma$ is locally Lipschitz with linear growth and only $\int_{\RR^q\setminus\{0\}}1\wedge|z|^2\nu(\td z)<\infty$ is assumed can be treated by the same localisation argument as in Theorem 7.1 in \cite{fournier2011SimappLevstodifequ}, and the mean-square convergence could be generalised to the strong $L^p$-convergence for $p\in2\ZZ^+$ without much trouble. Nevertheless, it needs to be pointed out that the rate of convergence here is optimal for coupling the small jumps only - it might not be so if one can couple the entire L\'evy increment. For the same reason the results achieved in this article cannot be applied to recover Theorem 3.1 in \cite{fournier2011SimappLevstodifequ}. Finally, I believe the conditions of Theorem \ref{jump_normal_approx} can be relaxed to some extent. E.g., one may take a hint from Proposition A.2 in \cite{AsyGraColSpaHomBolEquSofCouPot} that it possibly suffices for $\nu$ to give a suitable portion of mass to the biggest annulus $\Omega_{r_0}$.

\paragraph{Acknowledgement.}I am grateful for the patient guidance of my Ph.D. supervisor, Prof. A. M. Davie, who suggested these problems and gave many helpful comments. I would also like to thank the referees for drawing my attention to several related works that I was not aware of, particularly \cite{BerBouEdgExpCenLimTheTraDis}, \cite{bonis2016RatCenLimTheDifAppviaSteMet} and \cite{AsyGraColSpaHomBolEquSofCouPot}.

\bibliographystyle{acm}
\bibliography{clt_levy}
\end{document}